\documentclass[12pt,reqno]{amsart}
\usepackage{amsmath,amssymb,amsthm,calc,verbatim,enumitem,tikz,url,hyperref,mathrsfs,cite,fullpage,mathtools}
\usepackage{bbm}
\usepackage{setspace}
\usepackage{latexsym}
\usepackage{color}
\usepackage{enumitem,faktor}
\usepackage[right]{lineno}
\usepackage{dsfont}

\usepackage{theoremref}
\pagestyle{plain}



\newcommand{\cM}{\ensuremath{\mathcal{M}}}

\newcommand{\eps}{\ensuremath{\varepsilon}}


\newcommand{\se}{\ensuremath{\subseteq}}

\newcommand{\ex}{\text{ex}}

\newcommand{\oldqed}{}
\def\endofFact{\hfill\scalebox{.6}{$\Box$}}
 
\newtheorem{theorem}{Theorem}

\newtheorem{lemma}[theorem]{Lemma}

\newtheorem{claim}[theorem]{Claim}

\theoremstyle{definition}
\newtheoremstyle{case}{}{}{\normalfont}{}{\itshape}{\normalfont:}{ }{}
\theoremstyle{case}

\setlength\marginparwidth{2cm}

\numberwithin{theorem}{section}

\makeatletter
\def\moverlay{\mathpalette\mov@rlay}
\def\mov@rlay#1#2{\leavevmode\vtop{%
		\baselineskip\z@skip \lineskiplimit-\maxdimen
		\ialign{\hfil$\m@th#1##$\hfil\cr#2\crcr}}}
\newcommand{\charfusion}[3][\mathord]{
	#1{\ifx#1\mathop\vphantom{#2}\fi
		\mathpalette\mov@rlay{#2\cr#3}
	}
	\ifx#1\mathop\expandafter\displaylimits\fi}
\makeatother
\newcommand{\cupdot}{\charfusion[\mathbin]{\cup}{\cdot}}

\begin{document}
	
	\title{On multicolor Tur\'an numbers }
	\author{J\'ozsef Balogh \and Anita Liebenau \and Let\'icia Mattos \and Natasha Morrison}

	\thanks{\rule[-.2\baselineskip]{0pt}{\baselineskip}%
		A.~Liebenau was partially supported by the Australian Research Council (DP).  
		J.~Balogh was partially supported by NSF grants DMS-1764123 and RTG DMS-1937241.
		L.~Mattos was supported by the Deutsche Forschungsgemeinschaft (DFG, German Research Foundation) under Germany's Excellence Strategy – The Berlin Mathematics Research Center MATH+ (EXC-2046/1, project ID: 390685689) and AMS-Simons Travel fund.
		N.~Morrison was supported by NSERC Discovery Grant RGPIN-2021-02511.
		}
	
	\address{ \phantom{a} \newline J\'ozsef Balogh \newline
	Department of Mathematics, University of Illinois at Urbana-Champaign, Urbana, Illinois 61801, USA. \texttt{E-mail:jobal@illinois.edu}}

	\address{ \phantom{a} \newline Anita Liebenau\newline
	School of Mathematics and Statistics, UNSW Sydney, NSW 2052, Australia\newline
	E-mail: \texttt{a.liebenau@unsw.edu.au}}

	\address{ \phantom{a} \newline Letícia Mattos \newline
		Institut für Informatik, Universität Heidelberg, Im Neuenheimer Feld 205, D-69120 Heidelberg, Germany \newline
	    E-mail: \texttt{E-mail:mattos@uni-heidelberg.de}}
	
		\address{\phantom{a} \newline Natasha Morrison\newline
		Department of Mathematics and Statistics, University of Victoria, Victoria, B.C., Canada \newline E-mail: \texttt{nmorrison@uvic.ca}}
	
	\begin{abstract}
		We address a problem which is a generalization of Turán-type problems recently introduced by Imolay, Karl, Nagy and Váli.
		Let $F$ be a fixed graph and let $G$ be the union of $k$ edge-disjoint copies of $F$, namely $G = \cupdot_{i=1}^{k} F_i$, where each $F_i$ is isomorphic to a fixed graph $F$ and $E(F_i)\cap E(F_j)=\emptyset$ for all $i \neq j$.
		We call a subgraph $H\se G$ \emph{multicolored} if $H$ and $F_i$ share at most one edge for all $i$.
		Define $\ex_F(H,n)$ to be the maximum value $k$ such that there exists 
		$G = \cupdot_{i=1}^{k} F_i$ on $n$ vertices without a multicolored copy of $H$.
		We show that $\ex_{C_5}(C_3,n) \le n^2/25 + 3n/25+o(n)$ and that all extremal graphs are close to a blow-up of the 5-cycle.
		This bound is tight up to the linear error term.
	\end{abstract}
	
	\maketitle
	
	\section{Introduction}
	
	For a graph $G$, let $\text{ex}(n,G)$ denote the maximum number of edges in a graph on $n$ vertices that does not contain $G$ as a subgraph.
	The classical Turán theorem~\cite{turan} from 1941 states that $\text{ex}(n,K_{t+1}) = e(T_{n,t})$, where $K_t$ denotes the complete graph on $t$ vertices and $T_{n,t}$ denotes the complete $t$-partite graph on $n$ vertices whose part sizes are as equal  as possible.
	Several multicolored generalizations of Turán-type problems have been studied since then.
	Keevash, Mubayi, Sudakov and Versta\"ete~\cite{KMSV} introduced the concept of the rainbow Turán numbers.
	For a non-bipartite graph $H$, they asymptotically determined the maximum
	number of edges in a graph on $n$ vertices that has a proper edge-coloring with no
	rainbow $H$. 
	Another variant was introduced by Conlon and Tyomkyn~\cite{ConlonTyomkyn}.
	For a graph $F$, they obtained bounds on the minimum number of colors in a proper edge-coloring of $K_n$ that does not contain $k$ vertex-disjoint color isomorphic copies of $F$.

	This paper focuses on a related Turán-type problem recently introduced by Imolay, Karl, Nagy and Váli~\cite{IMOLAY2022112976}.
	Let $F$ be a fixed graph and let $G$ be the union of $k$ edge-disjoint copies of $F$.
	That is, $G = \cupdot_{i=1}^{k} F_i$, where each $F_i$ is isomorphic to a fixed graph $F$ and $E(F_i)\cap E(F_j)=\emptyset$ for all $i \neq j$.
	We call a subgraph $H\se G$ \emph{multicolored} if $H$ and $F_i$ share at most one edge for every $i$.
	Define $\ex_F(H,n)$ to be the maximum  $k$ such that there exists 
	$G = \cupdot_{i=1}^{k} F_i$ on $n$ vertices with no multicolored copy of $H$.
	
	Determining $\ex_F(H,n)$ in general can be a very hard problem.
	For example, $\ex_{C_3}(C_3,n)$ is related to the famous $(6,3)$-problem,
	introduced by Ruzsa and Szemerédi~\cite{RSz} in 1978.
	A variant of the $(6,3)$-problem asks for the maximum number of edges in a graph in which each edge belongs to a unique triangle, i.e., asks to determine $\ex_{C_3}(C_3,n)$.
	The exact asymptotics of $\ex_{C_3}(C_3,n)$ are still unknown.

	For pairs of graphs $(F,H)$ for which there is no homomorphism from $H$ to $F$, 
	Imolay, Karl, Nagy and Váli~\cite{IMOLAY2022112976} showed that the extremal number $\ex_F(H,n)$ is at least $n^2/v(F)^2-o(n^2)$.
	The construction is based on packing copies of $F$ into a blow-up of $F$ on $n$ vertices, whose parts have sizes as equal  as possible.
	Imolay, Karl, Nagy and Váli~\cite{IMOLAY2022112976} also 
	proposed the problem of determining the set of pairs $(F,H)$ for which $\text{ex}_F(n,H) = n^2/v(F)^2+o(n^2)$, and suggested that this should be the case for $F=C_5$ and $H=C_3$. 
	Recently, Kovács and Nagy~\cite{kovacs2022multicolor} showed that this is indeed true for $\ex_{C_5}(C_3,n)$. 
	Moreover, from their methods it follows that $\ex_{C_5}(C_3,n) \le n^2/25 + 3n/10$.
	
	It is natural to conjecture that $\ex_{C_5}(C_3,n)$ is precisely equal to the maximum number of edge-disjoint copies of $C_5$ in a balanced blow-up of $C_5$ on $n$ vertices. Therefore, depending on the residue of $n$ modulo 5, we  expect a linear order term to be present in the bounds on $\ex_{C_5}(C_3,n)$.
	For more details, see the discussion in Section~\ref{sec:approximate}.
	
	At the same time that Kovács and Nagy~\cite{kovacs2022multicolor} announced their result, we obtained a similar upper bound for $\ex_{C_5}(C_3,n)$. By combining our methods with theirs, we were able to improve the linear order term.

	\begin{theorem}\label{thm:main}
		For every $\delta > 0$  there exists $n_\delta \in \mathbb{N}$ such that for every $n \ge n_\delta$, we have
		$$\ex_{C_5}(C_3,n) \le \dfrac{n^2}{25}+ \dfrac{3n}{25}+\delta n.$$
	\end{theorem}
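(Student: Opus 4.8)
The plan is to take the a priori bound $\ex_{C_5}(C_3,n)\le n^2/25+3n/10$ (available from the methods of Kov\'acs and Nagy, as noted above) as a starting point, extract strong structural information from a near-extremal configuration, and then cut the linear term down to $3n/25$. Fix $G=\cupdot_{i=1}^k F_i$ with no multicoloured $C_3$ and write $c(e)=i$ for the unique $i$ with $e\in F_i$; since each colour class is a $C_5$, every vertex sees each colour on $0$ or exactly $2$ of its incident edges, and ``no multicoloured $C_3$'' says precisely that no triangle of $G$ has three distinctly-coloured edges. From the a priori bound we already have $e(G)=5k\le n^2/5+3n/2$, so $G$ is within $O(n)$ of the extremal edge count.

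\textbf{Local structure.} For a vertex $v$ and an edge $xy$ with $x,y\in N(v)$ the triangle $vxy$ is non-rainbow, so either $c(vx)=c(vy)$ --- whence $x,y$ are the two $C_5$-neighbours of $v$ in $F_{c(vx)}$, so at most one such edge occurs per colour at $v$, hence at most $d(v)/2$ in all --- or, say, $c(xy)=c(vx)$, whence $v,x,y$ is a path in $F_{c(vx)}\cong C_5$ and $y$ is determined by the edge $vx$, hence at most $d(v)$ in all. Thus $e(G[N(v)])\le\tfrac32 d(v)$ for every $v$, so $G$ carries at most $e(G)$ triangles. Feeding this, together with $e(G)=n^2/5+O(n)$ and the asymptotic result of Kov\'acs and Nagy, into a stability argument should yield: there is a partition $V(G)=V_1\cupdot\dots\cupdot V_5$ with $|V_i|=n/5+o(n)$ such that all but $o(n^2)$ edges of $G$ run between consecutive classes; this also delivers the structural conclusion advertised in the abstract.

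\textbf{Reduction to bad edges.} Call an edge \emph{bad} if it lies inside some $V_i$ or between non-consecutive classes, and call $F_j$ \emph{transversal} if all five of its edges are good. Since $C_5$ is triangle-free, every closed walk of length $5$ in the auxiliary $5$-cycle on $\{V_1,\dots,V_5\}$ is injective; hence a transversal $F_j$ meets each class exactly once and uses exactly one edge of each block $V_i\times V_{i+1}$. As the $F_j$ are edge-disjoint, the number of transversal copies is at most $\min_i|V_i||V_{i+1}|\le\big(\prod_{i=1}^5|V_i||V_{i+1}|\big)^{1/5}=\big(\prod_{i=1}^5|V_i|\big)^{2/5}\le(n/5)^2=n^2/25$, while every non-transversal copy contains a bad edge lying in no other copy. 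Therefore
\[
k\;\le\;\frac{n^2}{25}\;+\;\bigl(\text{number of bad edges}\bigr),
\]
and it remains to bound the number of bad edges --- or, more flexibly, its trade-off against the slack that imbalance of the $|V_i|$ leaves in $\min_i|V_i||V_{i+1}|$ --- by $3n/25+\delta n$.

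\textbf{The crux: the linear term.} To control the bad edges I would clean $G$ to a rigid core: delete a small exceptional set $W$ (vertices of atypical degree, with abnormally many incident bad edges, or in a class of the wrong size), so that every surviving $v\in V_i$ is adjacent to almost all of $V_{i-1}\cup V_{i+1}$ and to only $O(1)$ vertices outside it. The heart of the matter is the local claim that such a rigid $v$ has $O(1)$ bad edges: a bad neighbour $u$ of $v$ forces, via the non-rainbow condition on the $\Theta(n)$ triangles $uvw$ with $w$ a common neighbour and using that each colour appears at most twice at each of $u,v$, that all but $O(1)$ of these $w$ satisfy $c(uw)=c(vw)$; hence almost all copies through $v$ pass through $u$ as a distance-$2$ vertex. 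Two bad neighbours of $v$ in the same wrong class would impose two such rigid patterns at once, reconcilable only through many non-transversal copies, which the bound on bad copies (and ultimately $k\le n^2/25+3n/10$) rules out --- leaving $v$ with only $O(1)$ bad edges. Summing over the $\le n$ rigid vertices, adding the copies meeting $W$ (few, as $|W|$ is small and $W$-vertices lie in few copies), and optimising over the class sizes then yields $k\le n^2/25+3n/25+\delta n$. I expect the main obstacle to be exactly this last stage: carrying out the rigidity upgrade from $o(n^2)$-closeness and the bootstrap cleanly, controlling $|W|$, and checking that the accumulated linear errors --- class imbalance, bad edges at rigid vertices, edges at $W$, and the stability slack --- really total no more than $3n/25+\delta n$.
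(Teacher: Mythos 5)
Your local structure lemma (the bound $e(G[N(v)])\le\tfrac32 d(v)$) is identical to Lemma~\ref{lemma:edgesinsideneighborhood} in the paper, and your stability step matches Section~3. But the route you take from there diverges from the paper's, and the divergence contains a genuine gap. Your reduction $k\le n^2/25+|\text{bad edges}|$ is correct, but the subsequent plan to show that the number of bad edges is at most $3n/25+\delta n$ cannot succeed: the paper's own extremal construction in Section~\ref{sec:lower-bounds} (perturbing a balanced blow-up of $C_5$ by swapping two edges in each of $q=n/5$ cycles) achieves $k=n^2/25$ while carrying $2n/5$ bad edges. Indeed the paper's Theorem~\ref{thm:main-structure} proves $2n/5+o(n)$ is the \emph{sharp} bound on bad edges. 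So the number you are aiming for is off by a factor of more than $3$.

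The ``trade-off'' you gesture at is the right instinct, but it is harder to close than the sketch suggests. Each bad edge at a vertex reduces its good degree by one, which reduces the total structured edge count by about $b$ (where $b$ is the number of bad edges) and hence reduces $\min_i e(V_i,V_{i+1})$ by roughly $b/5$. Combined with your bound $k\le\min_i e(V_i,V_{i+1})+b$ this gives only $k\le n^2/25+4b/5$, which at $b=2n/5$ yields $8n/25$, still too large. One would need in addition that each non-transversal copy carries at least two bad edges to improve the constant, but that is false in general: a $C_5$ visiting classes $(1,2,3,4,3)$ in order has exactly one bad edge. Controlling how often such one-bad-edge copies occur would require further structural input, and it is not clear how to do this locally at each vertex. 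The paper sidesteps this entirely: instead of bounding bad edges directly, it uses the structure (including the $2n/5$ bound on unstructured edges, from Lemmas~\ref{lemma:noedgesinsideAi} and~\ref{lemma:boundonM}) only to bound the number of \emph{triangles} by $2n^2/25+o(n^2)$ (Lemma~\ref{numberoftriangles}), then feeds this into the Kov\'acs--Nagy degree double-count $\sum_{v\in C_i}d(v)\le 2n+2\Delta_i^2+\Delta_i^1$ (Lemma~\ref{lemma:sumofdeginC}) and compares against $\sum_v d(v)^2/2\ge 2e(G)^2/n$. Solving the resulting quadratic in $k$ absorbs the bad-edge/good-edge trade-off algebraically and delivers the $3n/25$ coefficient cleanly, without ever having to track how many bad edges sit on an individual copy.
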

	
	Say that a graph $G$ is an extremal multicolored triangle-free graph if  the edge set of $G$ can be partitioned into $\ex_{C_5}(C_3,n)$ edge-disjoint copies of $C_5$ and $G$ does not contain a multicolored copy of $C_3$.
	Our second result captures the structure of all extremal multicolored triangle-free graphs. For $i\in[5]$, we denote by $G(A_i,A_{i+1})$ the bipartite subgraph of $G$ on vertex set $A_i\cup A_{i+1}$ and edges $uv\in E(G)$ where $u\in A_i$ and $v\in A_{i+1}$. 
	
	\begin{theorem}\thlabel{thm:main-structure}
		For every $\delta >0$ there exists $n_\delta \in \mathbb{N}$ such that the following holds for every $n\ge n_\delta$.
		Let $G$ be an extremal multicolored triangle-free graph on $n$ vertices.
		Then there exists a partition $V(G) = A_1 \cupdot \cdots \cupdot A_5$ such that all but $2n/5+\delta n$ edges of $G$ belong to $\bigcup_{i} G(A_i,A_{i+1})$. Moreover, for every $i \in [5]$ we have
		\[n/5 - 2^4 \le |A_i| \le n/5 +2^6.\]
	\end{theorem}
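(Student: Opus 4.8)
The plan is to pin down $e(G)$ from Theorem~\ref{thm:main} together with a near-optimal construction, run a stability argument to recover the cyclic structure, and then bootstrap the stability into the claimed $O(1)$ control on the part sizes. Fix a decomposition of $G$ into $k=\ex_{C_5}(C_3,n)$ edge-disjoint copies $F_1,\dots,F_k$ of $C_5$, so that $e(G)=5k$. Theorem~\ref{thm:main} gives $k\le n^2/25+3n/25+\delta' n$ for a small $\delta'=\delta'(\delta)$, and the balanced blow-up construction of Section~\ref{sec:approximate} gives a matching lower bound of the form $k\ge n^2/25+3n/25-O(1)$; in particular $e(G)=n^2/5+3n/5+O(\delta' n)$, and we will use throughout that the slack in these estimates is small.

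First I would establish \emph{local sparseness}. Fix $v$, and partition $N(v)$ into the colour-pairs $\{x_i,y_i\}$, one for each class $F_i$ containing $v$, where $F_i$ is the $5$-cycle $v,x_i,a_i,b_i,y_i$. For an edge $pq\in E(G)$ with $p,q\in N(v)$, the triangle $vpq$ is not multicolored, so two of the colours $c(vp),c(vq),c(pq)$ coincide; a short case check then forces $pq\in\{x_iy_i\}\cup\{x_ia_i,\,y_ib_i\}$ for some $i$. Hence $e(G[N(v)])\le\tfrac32\deg_G(v)$, and summing over $v$ shows that $G$ has at most $e(G)=O(n^2)$ triangles. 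Now I would apply the triangle removal lemma: delete $o(n^2)$ edges to reach a triangle-free $G_0\subseteq G$, in which all but $o(n^2)$ of the classes $F_i$ survive intact, so $G_0$ is triangle-free with $e(G_0)\ge n^2/5-o(n^2)$ and contains $(1-o(1))k$ edge-disjoint copies of $C_5$ (in particular it is non-bipartite). Discarding the $o(n)$ vertices of small degree and invoking the structure of dense triangle-free graphs (Andr\'asfai--Erd\H{o}s--S\'os and its refinements for minimum degree near $2n/5$) yields that $G_0$, hence $G$, is $o(n^2)$-close to the balanced blow-up of $C_5$. This produces the partition $V(G)=A_1\cupdot\cdots\cupdot A_5$ with $|A_i|=n/5+o(n)$ and all but $o(n^2)$ edges inside $\bigcup_iG(A_i,A_{i+1})$, after which I re-optimise it so that moving any single vertex cannot decrease the number of edges outside $\bigcup_iG(A_i,A_{i+1})$.

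With the partition fixed, call $F_i$ \emph{transversal} if it meets each $A_j$ in one vertex (equivalently, all five of its edges lie in $\bigcup_jG(A_j,A_{j+1})$), and \emph{crossing} otherwise; a crossing copy always has an edge outside $\bigcup_jG(A_j,A_{j+1})$. Distinct transversal copies use disjoint edges of each bipartite pair $G(A_j,A_{j+1})$, so their number is at most $\min_j|A_j||A_{j+1}|$, and by AM--GM
\[
\Big(\min_j|A_j||A_{j+1}|\Big)^{5}\le\prod_{j\in\mathbb{Z}_5}|A_j||A_{j+1}|=\Big(\prod_j|A_j|\Big)^{2}\le(n/5)^{10},
\]
so at most $n^2/25$ copies are transversal. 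The core step is a charging argument exploiting the absence of a multicolored $C_3$: each crossing copy pins down a bounded local configuration around its bad edge, and crossing copies sharing too much of this configuration would close a multicolored triangle; this bounds the number of edges outside $\bigcup_jG(A_j,A_{j+1})$ by $2n/5+\delta n$, which is the edge bound in the theorem. For the $O(1)$ control on the part sizes one cannot absorb the $\delta n$ slack of Theorem~\ref{thm:main}; instead I would rerun the count with the imbalance $s:=\sum_j(|A_j|-n/5)^2$ tracked explicitly, using the stability form of the displayed AM--GM inequality to bound the transversal count by $n^2/25-\Omega(s)$ and a refinement of the charging argument to bound the number of crossing copies that push $k$ beyond $n^2/25$ by $3n/25+O(1)-\Omega(s)$. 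Comparing with $k\ge n^2/25+3n/25-O(1)$ then forces $s=O(1)$, and chasing the constants through the charging argument gives $n/5-2^4\le|A_i|\le n/5+2^6$.

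The hard part is the charging argument and, above all, its quantitative refinement. The transversal copies are governed by a one-line counting inequality, but the crossing copies interact with one another and with the bulk of $G$ only through the no-multicolored-$C_3$ condition, so controlling both their number and its dependence on the imbalance $s$ is delicate; this is exactly where the constants $3/25$ and $2/5$ come from, and it is the part of the analysis shared with, and most intricate in, the proof of Theorem~\ref{thm:main}.
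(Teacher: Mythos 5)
Your high-level skeleton (local sparseness of neighborhoods, triangle removal plus a stability theorem to get the rough $C_5$-blow-up structure, then a quantitative refinement) matches the paper's, but the quantitative refinement—the part that actually produces the $2n/5+\delta n$ edge bound and the $O(1)$ control on part sizes—is where your argument has real gaps.

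First, a circularity warning: the paper proves \thref{thm:main-structure} \emph{before} \thref{thm:main} and uses it (via the bound on unstructured edges) to prove \thref{thm:main}, so invoking \thref{thm:main} as an ingredient here would need a separate, structure-free proof of that bound. This is recoverable (the paper's \thref{thm:mainlinearapprox} is a self-contained $n^2/25+O(n)$ upper bound you could use instead), but the more serious problem is your claimed lower bound: you write that the construction gives $k\ge n^2/25+3n/25-O(1)$. It does not. The balanced blow-up gives $k\ge t(n)=q^2+q\,\mathds{1}_{r\in\{3,4\}}$, which is $n^2/25 \pm O(n)$; in fact $t(n)$ can be as small as $n^2/25-2n/5$. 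So the gap between the available lower bound and the upper bound from \thref{thm:main} is $\Theta(n)$, not $O(1)$. Your whole strategy of bounding the transversal/crossing counts by $n^2/25-\Omega(s)$ and $3n/25+O(1)-\Omega(s)$ and then squeezing against a lower bound with only $O(1)$ slack is therefore unsound: there is a $\Theta(n)$ slack you cannot absorb, and $s=O(1)$ does not follow.

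Second, the ``core step'' (the charging argument) is asserted rather than done, and this is precisely the content of the theorem. The paper's actual mechanism is concrete: (i) the variance bound $\sum_v s_v^2=O(qn)$ from \thref{thm:mainlinearapprox} shows that at most $O(1/\gamma^2)$ vertices have degree outside $(2/5\pm\gamma)n$; (ii) on the remaining ``good'' vertices the unstructured edges form a matching (\thref{lemma:noedgesinsideAi}), so each monochromatic $C_5$ with all vertices good (a ``great'' cycle) contains at most two unstructured edges; (iii) each unstructured edge between good vertices lies in at least $n/5-O(\gamma n)$ great cycles (\thref{boundingtab}). Double counting pairs (unstructured edge, great cycle containing both its endpoints) gives $|M_\gamma|\cdot(n/5-O(\gamma n))\le 2e(G)/5$, hence $|M|\le 2q+O(\gamma q)$. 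Your proposal never articulates why each short cycle can see only boundedly many unstructured edges or why each unstructured edge is in many cycles; without these two facts you have no handle on $|M|$.

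Third, the $O(1)$ bound on $|A_i|$ does not need (and cannot rely on) pinning $k$ down to $O(1)$. The paper instead observes that once $|M|=O(q)$, all but $O(q)$ of the $q^2$ cycles are great and transversal, so $e(A_i,A_{i+1})\ge q^2-O(q)$ \emph{for each $i$ individually}. Feeding these per-pair lower bounds into a convexity argument (if $|A_1|=q-i$ then $|A_2|,|A_5|\ge q+i-O(1)$, hence $|A_3||A_4|\le(q-i/2+O(1))^2<q^2-O(q)$ once $i\ge 15$) yields the constant bounds directly, using only the crude lower bound $k\ge q^2$. Your AM--GM bound on the number of transversal copies is correct but goes in the wrong direction: you need a lower bound on each $e(A_i,A_{i+1})$, not an upper bound on the total, to run a convexity contradiction.

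Finally, a minor point: you cite Andr\'asfai--Erd\H{o}s--S\'os-type minimum-degree stability; the paper uses the Erd\H{o}s--Gy\H{o}ri--Simonovits stability theorem after triangle removal. Both routes can deliver the $o(n^2)$-closeness to a blow-up of $C_5$, so this is a legitimate alternative at that step.
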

	
	Our bound on the number of edges that do not belong to $\bigcup_{i} E_G(A_i,A_{i+1})$ 
	is best possible. One may create a small perturbation of a blow-up of $C_5$  to generate another extremal graph with $2n/5$ edges across different parts, we provide the details in Section~\ref{sec:approximate}. 

	The paper is organised as follows. In Section 2, we deduce an approximate version of Theorem~\ref{thm:main}, but with a worse linear error term. This approximate estimate is used in Section 3, where we show that all extremal examples are close to a blow-up of $C_5$, up to a small quadratic error term. We also deduce some other properties of extremal graphs.
	In Section 4, we prove Theorem~\ref{thm:main-structure}.
	Finally, in Section 5 we prove Theorem~\ref{thm:main}.

	\section{An approximate asymptotic result}\label{sec:approximate}
	
	In this section, we prove a version of Theorem~\ref{thm:main} with a slightly worse linear error term, see Theorem~\ref{thm:mainlinearapprox}.
	From now on we let $G$ be a graph with vertex set $[n]$ whose edge set is written as a  union of $\text{ex}_{C_5}(C_3,n)$ edge-disjoint copies of $C_5$.
	We denote this decomposition by $G = \cupdot_{j=1}^{k} F_j$, where each $F_j$ is a copy of $C_5$ and $k = \ex_{C_5}(C_3,n)$.
	Moreover, we identify each $F_j$ with the color $j$ and denote the function associated with this coloring by $c:E(G)\to \mathbb{N}$.
	
	\subsection{A discussion on lower bounds}\label{sec:lower-bounds}
	By Theorem 3.1 in~\cite{IMOLAY2022112976}, we have that $\ex_{C_5}(C_3,n) \ge n^2/25-o(n^2)$.
	Here, we need a more precise estimate.
	We say that a graph $H$ is a {\it blow-up} of $C_5$ if there exists a partition of $V(H)=A_1 \cupdot \cdots \cupdot A_5$ 
	such that $$E(H)=\bigcup_{i \in [5]}\{ab: a \in A_i \text{ and } b \in A_{i+1}\},$$
	where $[n]:=\{1,\ldots,n\}$.
	Whenever we are dealing with parts $A_1,\ldots,A_5$, indices are always interpreted modulo $5$.
	For each $a = (a_1,\ldots,a_5) \in \mathbb{N}^5$, define $b(a) = \min_i a_i a_{i+1}$.
	Observe that if we have a blow-up of $C_5$ with parts of sizes $a_1,\ldots,a_5$, then $b(a)$ is the minimum number of edges between two consecutive parts.

	It is not hard to show that if $G$ is a subgraph of the blow-up of $C_5$ with parts of size $(a_1,\ldots,a_5)$, then $k \le b(a)$ and equality can be attained.
	Moreover, under the restriction $a_1+\ldots+a_5=n$, $b(a)$ is maximized by a balanced blow-up, that is when the parts have as equal size as possible.
	Define
	\[t(n):=\max \Big \{b(a): a \in \mathbb{N}^5, a_1 + \ldots + a_5 =n \Big \}.  \] 
	By writing $n = 5q+r$, where $r \in \{0,\ldots,4\}$,
	one can check that $t(n)=q^2$ if $r \in \{0,1,2\}$ and $t(n)=q(q+1)$ if $r \in \{3,4\}$.
	In particular, we have
	\begin{align}\label{eq:lower-bound-extremal}
		\dfrac{n^2}{25} - \dfrac{2n}{5} \le q^2+q \mathds{1}_{r \in \{3,4\}}= t(n) \le \ex_{C_5}(C_3,n).
	\end{align}

We now show that the term $2n/5$ in \thref{thm:main-structure} is tight by providing graphs on $n$ vertices whose edge sets are decomposed into $t(n)$ monochromatic copies of $C_5$ without a multicolored triangle (they do contain $2n/5$ (non-multicolored) triangles). We do so by perturbing a balanced blow-up of $C_5$. 
To describe the construction, say that $x_1x_2x_3x_4x_5$ is a copy $F$ of $C_5$ if $x_1,\ldots, x_5$ are the vertices of $F$, and edges are $x_ix_{i+1}$, for all $i=1,\ldots,5.$
For simplicity, assume that $n$ is a multiple of five, let $q=n/5$, and let $A_1,\ldots,A_5$ be disjoint sets, each of size $q$. 
Let $(v_j^i)_{j \in [q]}$ be a labeling of the vertices of $A_i$, for each $i \in [5]$. 
For each $i,j\in [q]$, let $F_{i,j}=v_i^1v_j^2 v_i^3 v_j^4 v_{i+j}^5$ be a copy of $C_5$.
Clearly, this is a collection of $q^2$ edge-disjoint 5-cycles forming a balanced blow-up of $C_5.$ Now, for every $i \in [q]$, remove the cycle $v_i^1 v_i^2 v_i^3 v_i^4 v_{2i}^5$, and replace it by $v^1_iv^3_iv^2_iv^4_iv^5_{2i}$. That is, we remove the edges $v^1_iv_i^2$ and  $v^3_iv^4_i$, and replace them by  $v^1_iv^3_i$ and $v^2_iv^4_i$. Let us call these new edges {\em crossing edges}.
This switch of edges will not create a multicolored triangle.
Indeed, observe that the crossing edges form a matching of size $2q=2n/5$.
The only triangles that are created are those with vertices in $A_1$, $A_2$ and $A_3$ or with vertices in $A_2$, $A_3$ and $A_4$.
Moreover, the triangles are of the form $v^{1+k}_iv^{2+k}_jv_i^{3+k}$, for some $i,j,k$, which means that the edges $v^{1+k}_iv^{2+k}_j$ and $v^{2+k}_jv_i^{3+k}$ have the same color. Therefore, the constructed graph does not contain a multicolored triangle.

	\subsection{Upper bounds} 
	If a graph $G$ is the union of edge-disjoint copies of $C_5$, each receiving a distinct color, then it is clear that every vertex $v \in V(G)$ has even degree,  and that in each color, $v$ is incident to exactly zero or two edges. 
	If $G$ is an extremal multicolored triangle-free graph then $G$ does not have multicolored triangles. In general, however, $G$ may have many triangles formed by two color classes, as exemplified at the end of the previous subsection. 
	Our next lemma states that for every $v \in V(G)$, the number of edges inside the neighborhood of $v$ is at most $3d(v)/2$,  hence $G$ does not have many triangles.
	
	\begin{lemma}\thlabel{lemma:edgesinsideneighborhood}
		For each $v \in V(G)$, there are at most $3d(v)/2$ edges inside $N_G(v)$. Consequently, there are at most $e(G)\le n^2/2$ triangles in $G$.
	\end{lemma}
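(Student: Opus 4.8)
The plan is to exploit the interplay between the $C_5$-decomposition of $G$ and the absence of a multicolored triangle. The first observation is that a triangle $T$ of $G$ fails to be multicolored exactly when two of its three edges lie in a common color class $F_j$; since each $F_j$ is a copy of $C_5$ and hence triangle-free, $F_j$ cannot contain all three edges of $T$, so the two edges of $T$ lying in $F_j$ share a vertex. As $G$ has no multicolored triangle, every triangle of $G$ therefore contains two distinct edges of the same color. In particular, fixing $v\in V(G)$, for every edge $xy$ with $x,y\in N_G(v)$ the triangle $vxy$ has two edges of equal color among $vx,vy,xy$, and I would bound the number of such edges $xy$ by splitting into two types.

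\emph{Type A} is when $c(vx)=c(vy)=:j$. Here the two edges of $F_j\cong C_5$ incident to $v$ must be exactly $vx$ and $vy$, because $v$ is incident to either $0$ or $2$ edges of each color; hence the pair $\{x,y\}$ is determined by $j$, and since there are exactly $d(v)/2$ colors present at $v$, there are at most $d(v)/2$ edges of Type A. \emph{Type B} is when $c(xy)$ equals $c(vx)$ or $c(vy)$, say $c(xy)=c(vx)=:j$. Then $xy\in F_j$, and since in $F_j\cong C_5$ the vertex $x$ has exactly one neighbor besides $v$, the edge $xy$ is determined by the edge $vx$. Thus the map sending each of the $d(v)$ edges $vx$ at $v$ to the unique edge of $F_{c(vx)}$ incident to $x$ other than $vx$ (when that edge lies inside $N_G(v)$) has image containing every Type B edge, so there are at most $d(v)$ of them. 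Every edge inside $N_G(v)$ is of Type A or Type B, giving at most $d(v)/2 + d(v) = 3d(v)/2$ edges inside $N_G(v)$.

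For the second statement I would sum the bound over all vertices. The edges inside $N_G(v)$ are in bijection with the triangles of $G$ through $v$ (the edge $yz$ corresponds to the triangle $vyz$), so $\sum_{v\in V(G)}\bigl(\text{number of edges inside }N_G(v)\bigr)=3t(G)$, where $t(G)$ denotes the number of triangles of $G$. Combining this with the per-vertex bound and with $\sum_{v\in V(G)}d(v)=2e(G)$ yields $t(G)\le e(G)$, and $e(G)\le\binom{n}{2}\le n^2/2$ completes the argument.

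The proof is short, and I expect no real obstacle; the only point requiring care is the counting of Type B edges, where one must use both that $F_j$ is a copy of $C_5$ (so each of its vertices has degree exactly $2$ in it) and that $v$ meets each color in $0$ or $2$ edges, to be sure that the relevant ``other neighbor'' of $x$ is well defined and that the map above covers every Type B edge without undercounting.
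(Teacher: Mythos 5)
Your proof is correct and takes essentially the same approach as the paper: you partition edges inside $N_G(v)$ into the ``monochromatic cherry'' pairs (your Type A, the paper's set $\cM$) and the remainder (your Type B), and bound these by $d(v)/2$ and $d(v)$ respectively. The only difference is cosmetic: the paper bounds Type B edges by fixing a color $j$ incident to $v$ and arguing at most two color-$j$ edges inside $N_G(v)$ escape $\cM$, while you bound them by mapping each edge $vx$ to the unique other edge of $F_{c(vx)}$ at $x$; both count the same objects (two edges at $v$ per color) and yield $d(v)$.
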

	
	\begin{proof}
		Let $v \in V(G)$ and $u_1,\ldots, u_{d(v)}$ be the neighbors of $v$.
		Without loss of generality, suppose that $c(vu_i)=c(vu_{i+d(v)/2})=i$,
		for all $i \in [d(v)/2]$.
		Let $\cM = \{\{u_i,u_{i+d(v)/2}\}: i \in [d(v)/2]\}\}$
		be the collection of pairs of vertices which form a monochromatic cherry with $v$.
		Note that these pairs could be edges in $G$, as an edge of the form $u_i u_{i+d(v)/2}$ does not create a multicolored triangle with $v$.
		Let $xy$ be an edge of $G$ inside $N_G(v)$ which does not belong to $\cM$.
		Then, $c(vx)\neq c(vy)$ and hence we must have $c(xy)=c(vx)$ or $c(xy)=c(vy)$.
		Therefore, every edge $e$ inside $N_G(v)$ which is not in $\cM$, must have an endpoint $z$ such that $c(e) = c(zv)$.
		
		Now, fix some color $j \in [d(v)/2]$ (i.e.~a color incident to $v$). We claim that there are at most two edges in color $j$ inside $N_G(v)$ that are not in $\cM$. Indeed, suppose that there are three such edges. 
		These must form a path $u_jz_{1}z_{2}u_{j+d(v)/2}$ inside $N_G(v)$ contained in the color-$j$ copy of $C_{5}$, for some vertices $z_1,z_2 \in N_G(v)$. Then $c(vz_{1}) = c(vz_{2})$, as otherwise there would be a multicolored triangle in $G$. But this means that $\{z_{1}z_{2}\}\in\cM$. 
		
		As every edge inside $N_G(v)$ not in $\cM$ must have a color in $[d(v)/2]$ and as every such color appears at most two times outside $\cM$, we conclude that the number of edges inside $N_G(v)$ is at most $|\cM|+2 d(v)/2 = 3d(v)/2$.
	\end{proof}

	Let us briefly argue that \thref{lemma:edgesinsideneighborhood} is best possible. 
	Consider a 2-coloring of the edges of $K_5$ in which each color class forms a copy of  $C_5$. In this coloring, every vertex $v$ has $3d(v)/2 = 6$ edges inside its neighborhood.
	This  construction can be extended to larger graphs as follows. Let $G$ be a graph that consists of $n$ copies of $K_5$, all sharing exactly one vertex. 
	Color each of these copies of $K_5$ with two colors each, all distinct, in such a way that each monochromatic component is a copy of $C_5$.
	The degree of the common vertex is $4n$ and the number of edges in its neighborhood is $6n = 3d(v)/2$.

	 For a graph $G$ and $v \in V(G)$, define
	 \begin{align}\label{eq:defn-sv}
	 	s_v := d(v)-\dfrac{2e(G)}{n}.
	 \end{align}
	 The function $s_v$  is the difference between the degree of the vertex $v$ and the average degree of $G$,
	 in particular, $\sum_v s_v = 0$.
	 Our next theorem, which is the main result in this section, is a version of Theorem~\ref{thm:main} with a slightly worse linear error term.
	 The upper bound on $\sum_v s_v^2$ shall be used later to bound the number of vertices of small degree in extremal graphs, which is crucial in the proof of Theorem~\ref{thm:main-structure}.

	\begin{theorem}\thlabel{thm:mainlinearapprox}
		Let $q \in \mathbb{N}$, $r \in \{0,\ldots,4\}$ and set $n = 5q+r$.
		If $n \ge 26$, then
		\begin{align}\label{eq:uppermainlinearapprox}
			\ex_{C_5}(C_3,n)\le q^2 +q \left (6+\dfrac{8r}{5}-3\cdot \mathds{1}_{r \in \{3,4\}}+o(1)\right )-\dfrac{1}{n}\sum \limits_{v \in [n]}s_v^2.
		\end{align}
		Furthermore,
		\begin{align}\label{eq:uppermainlinearapprox-2}
			\sum \limits_{v \in [n]}s_v^2 \le \left(6+\dfrac{8r}{5}+o(1)\right)qn.
		\end{align}
	\end{theorem}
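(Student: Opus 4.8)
The plan is to reduce the statement to a single estimate on $\sum_v d(v)^2$, and then to prove that estimate using the pentagonal structure of the colour classes. The reduction is routine; the estimate is the heart of the matter.

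Since each of the $k$ colour classes is a copy of $C_5$, we have $m := e(G) = 5k$, so the average degree equals $\bar d = 2m/n = 10k/n$; writing $s_v = d(v) - \bar d$ we get $\sum_v s_v = 0$, hence the identity $\sum_v d(v)^2 = n\bar d^{\,2} + \sum_v s_v^2 = \tfrac{100k^2}{n} + \sum_v s_v^2$. Substituting this into \eqref{eq:uppermainlinearapprox} and clearing denominators shows that \eqref{eq:uppermainlinearapprox} is \emph{equivalent} to
\begin{equation}\label{eq:plan-core}
\sum_{v} d(v)^2 \;\le\; \frac{4m^2}{n} \;-\; \frac{mn}{5} \;+\; \Big(q^2 + \big(6 + \tfrac{8r}{5} - 3\cdot\mathds{1}_{r\in\{3,4\}}\big)q\Big)n \;+\; o(qn).
\end{equation}
It is also useful to note $\sum_v d(v)^2 = 2m + 2P$, where $P$ is the number of three-vertex paths in $G$, of which exactly $5k$ are monochromatic (each colour class contributes five, and any two edges lie in at most one colour class); so \eqref{eq:plan-core} is equivalent to an upper bound on the number of bichromatic cherries. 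Thus the whole theorem follows once \eqref{eq:plan-core} is proved.

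A purely local argument falls short. Combining \thref{lemma:edgesinsideneighborhood} with the trivial bound $e\big(N(v), V\setminus(N(v)\cup\{v\})\big) \le d(v)(n-1-d(v))$ and the identity $\sum_{u\in N(v)} d(u) = 2e(N(v)) + d(v) + e\big(N(v), V\setminus(N(v)\cup\{v\})\big)$, one gets $\sum_{u\in N(v)} d(u) \le d(v)(n+3-d(v))$ for every $v$; summing over $v$ (the left-hand sides total $\sum_u d(u)^2$) yields only $\sum_v d(v)^2 \le m(n+3)$, equivalently $k \le n^2/20 + O(n)$. This uses nothing about the colour classes beyond being triangle-free, so the gain from $1/20$ to $1/25$ must exploit the pentagonal structure globally. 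The way I would attempt this is via an approximate structural step: show that all but $o(n)$ vertices $v$ admit a bipartition $N(v) = L_v \cupdot R_v$ into roughly balanced parts with only $o(n)$ edges inside $L_v$ or inside $R_v$, and such that all but $o(n)$ of the monochromatic cherry-pairs at $v$ straddle it — a colour-$a$ pentagon through $v$ reads $\cdots u_a' - u_a - v - u_a^{*} - \cdots$, and the absence of multicoloured triangles prevents too many such pentagons from folding back onto a single side. Patching the local bipartitions consistently (adjacent vertices playing opposite roles) yields a partition $V(G) = A_1 \cupdot \cdots \cupdot A_5$ with all but $o(n^2)$ edges between consecutive classes; since $b(a)$ is maximised by the balanced blow-up and $t(n) = q^2 + q\mathds{1}_{r\in\{3,4\}}$, a stability argument gives $m \le t(n) + o(n^2)$, and feeding this back through $\sum_v s_v^2 = \sum_v d(v)^2 - 4m^2/n$ gives \eqref{eq:plan-core}, the explicit constant $6 + 8r/5 - 3\cdot\mathds{1}_{r\in\{3,4\}}$ emerging from optimising the integer-part-size blow-up count in terms of $q$ and $r$.

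I expect the main obstacle to be quantitative rather than structural: a crude stability argument leaves an $o(n^2)$ error in $e(G)$, hence in $k$, while \eqref{eq:uppermainlinearapprox} allows only an $o(n)$ error in $k$. To push the error down one must bound precisely how many vertices have atypically large degree, and this is exactly where \thref{lemma:edgesinsideneighborhood} is essential: a vertex of degree at least $(2/5 + \varepsilon)n$ has a neighbourhood far too sparse (at most $\tfrac32 d(v)$ internal edges) to coexist with many others of its kind, which caps $\sum_v s_v^2$ and converts the $o(n^2)$ loss into an $o(qn)$ loss. Finally, \eqref{eq:uppermainlinearapprox-2} is immediate from \eqref{eq:uppermainlinearapprox}: the construction behind \eqref{eq:lower-bound-extremal} gives $k = \ex_{C_5}(C_3,n) \ge t(n) = q^2 + q\mathds{1}_{r\in\{3,4\}}$, so subtracting from \eqref{eq:uppermainlinearapprox} leaves $\tfrac1n\sum_v s_v^2 \le \big(6 + \tfrac{8r}{5} + o(1)\big)q$, i.e.\ $\sum_v s_v^2 \le \big(6 + \tfrac{8r}{5} + o(1)\big)qn$.
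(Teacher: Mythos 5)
Your reduction to a bound on $\sum_v d(v)^2$ and your derivation of \eqref{eq:uppermainlinearapprox-2} from \eqref{eq:uppermainlinearapprox} via $\ex_{C_5}(C_3,n) \ge t(n)$ are both fine, and the latter matches the paper exactly. But \eqref{eq:uppermainlinearapprox} itself is not proved. The route you propose (locally near-bipartition each $N(v)$, patch to a $C_5$-blow-up, invoke stability) at best delivers $k \le t(n) + o(n^2)$, a quadratic error, whereas \eqref{eq:uppermainlinearapprox} needs a linear one. You recognize this yourself and then posit, without any mechanism, that bounding the number of atypical-degree vertices would ``convert the $o(n^2)$ loss into an $o(qn)$ loss''. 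In the paper all such refined bounds (\thref{claim:smallvertices} onward) are \emph{consequences} of \thref{thm:mainlinearapprox}, so invoking them here would be circular; no independent route is supplied, and this is a genuine gap, not a routine technicality.

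The paper's proof is in fact direct and uses no stability at all. The key observation you are missing is this: for \emph{any} bipartite subgraph $B \se G$, every colour class (a copy of $C_5$, hence an odd cycle) must place at least one edge in $G\setminus B$, and edge-disjointness of the classes makes these $k$ edges distinct, so $k \le e(G)-e(B)$. Taking $B=B_v$, the bipartite graph of edges between $N(v)$ and $[n]\setminus N(v)$, \thref{lemma:edgesinsideneighborhood} gives $e(B_v) \ge \sum_{u\in N(v)}d(u) - 3d(v)$; averaging over $v$ and using $\frac{1}{n}\sum_v d(v)^2 = \frac{4e(G)^2}{n^2} + \frac{1}{n}\sum_v s_v^2$ produces a vertex $v$ with
\[
k \;\le\; e(G) - e(B_v) \;\le\; e(G) - \frac{4e(G)^2}{n^2} + \frac{6e(G)}{n} - \frac{1}{n}\sum_v s_v^2.
\]
The map $x\mapsto x - 4x^2/n^2 + 6x/n$ is decreasing on the relevant range, so substituting the lower bound $e(G)\ge 5t(n)$ and expanding in $q,r$ yields exactly the constants in \eqref{eq:uppermainlinearapprox}. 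Your preliminary local calculation, which uses \thref{lemma:edgesinsideneighborhood} but not this odd-cycle/bipartiteness trick, is precisely the weaker $k\le n^2/20 + O(n)$ you identify; the bipartite observation is what closes the gap from $1/20$ to $1/25$.
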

	\begin{proof}
		Let $G$ be an extremal multicolored triangle-free graph on~$n$ vertices.
		We find a large bipartite graph~$B \se G$ and use the fact that $G \setminus B$ still needs to contain an edge from every~$F_i$, as~$C_5$ is not bipartite.
		For each $v \in [n]$, define~$B_v$ to be the bipartite graph induced by the edges between $N(v)$ and $[n]\setminus N(v)$.
		For every $v \in [n]$ we have
		\begin{align*}
			e(B_v) = \sum_{u \in N(v)}d(u)-2e(N(v)) \ge  \sum_{u \in N(v)}d(u)-3d(v),
		\end{align*}
		by~\thref{lemma:edgesinsideneighborhood}. 
		Taking the average over every vertex $v$, we obtain
		\begin{align}\label{eq:avgBv}
			\dfrac{1}{n}\sum \limits_{v\in [n]}e(B_v) \ge \dfrac{1}{n}\sum \limits_{v\in [n]} \sum_{u \in N(v)}d(u)-\dfrac{6e(G)}{n}.
		\end{align}
		Note that for each $u \in [n]$ the degree $d(u)$ appears exactly $d(u)$ times in the double sum.
		Thus,
		\begin{align}\label{eq:avgBv-2}
			\dfrac{1}{n}\sum_{v \in [n]} \sum_{u \in N(v)} d(u) = \dfrac{1}{n} \sum \limits_{v \in [n]} d(v)^2 = \dfrac{1}{n} \sum \limits_{v \in [n]} \left(\dfrac{2e(G)}{n}+s_v\right)^2 =  \dfrac{4e(G)^2}{n^2}+\dfrac{1}{n}\sum \limits_{v \in [n]}s_v^2,
		\end{align}
		where we used  that $\sum_v s_v = 0$.
		From~\eqref{eq:avgBv} and~\eqref{eq:avgBv-2}, it follows that there exists a vertex $v \in [n]$ for which
		\begin{align*}
			e(B_v)\ge  \frac{4e(G)^2}{n^2}+\dfrac{1}{n}\sum \limits_{v \in [n]}s_v^2-\frac{6e(G)}{n}.
		\end{align*}
		That is, there exists a bipartite graph $B$ such that 
		\begin{align}\label{eq:numberofcolors}
			e(G)-e(B)\le e(G)- \frac{4e(G)^2}{n^2}-\dfrac{1}{n}\sum \limits_{v \in [n]}s_v^2+\frac{6e(G)}{n}.
		\end{align}
	
		The function $f: \mathbb{R}\to \mathbb{R}$ given by
		$f(x)=x-\frac{4x^2}{n^2}+\frac{6x}{n}$ is decreasing in the interval 
		$[\frac{n^2}{8}+\frac{3n}{4} ,\binom{n}{2}]$.	
		As $n = 5q+r$, recall from~\eqref{eq:lower-bound-extremal} that $t(n) = q^2 +q\mathds{1}_{r \in \{3,4\}}$.
		As $e(G) \ge  5t(n)\ge\frac{n^2}{8}+\frac{3n}{4}$ when $n \ge 26$,  the right-hand size of~\eqref{eq:numberofcolors} is at most $f(5t(n))-\frac{1}{n}\sum \limits_{v \in [n]}s_v^2$ for $n$ sufficiently large.
		Now, note that
		\begin{align}\label{eq:tnovern}
				\dfrac{5t(n)}{n} = \dfrac{5q^2+5q\mathds{1}_{r \in \{3,4\}}}{5q+r} = q \cdot \dfrac{1+\frac{\mathds{1}_{r \in \{3,4\}}}{q}}{1+\frac{r}{5q}}.
		\end{align}
		As $1 - \frac{r}{5q} \le \Big ( 1 + \frac{r}{5q} \Big )^{-1} \le 1-\frac{r}{10q}$ for every $q$ sufficiently large, it follows from~\eqref{eq:tnovern} that
		\begin{align}\label{eq:tnovern-2}
			q \left ( 1-\frac{r}{5q} +\left ( 1- \dfrac{r}{5q} \right ) \dfrac{\mathds{1}_{r \in \{3,4\}}}{q}\right )\le \dfrac{5t(n)}{n} \le q+o(q).
		\end{align}
		By plugging~\eqref{eq:tnovern-2} into $f(5t(n))$, we obtain
		\begin{align}\label{eq:upperboundf}
			f(5t(n)) &\le 5q^2 + 5q\mathds{1}_{r \in \{3,4\}} -4q^2 \left ( 1- \dfrac{2r}{5q} + \dfrac{2\cdot \mathds{1}_{r \in \{3,4\}}}{q} \right ) +6q+o(q)\nonumber \\
			& \le q^2 +q \left (6+\dfrac{8r}{5}-3\cdot \mathds{1}_{r \in \{3,4\}}\right )+o(q).
		\end{align}
		Therefore, it follows from~\eqref{eq:numberofcolors} and~\eqref{eq:upperboundf} that
		\begin{align}\label{eq:diffGandB}
			e(G)-e(B)&\le q^2 +q \left (6+\dfrac{8r}{5}-3\cdot \mathds{1}_{r \in \{3,4\}}\right )+o(q) -\dfrac{1}{n}\sum \limits_{v \in [n]}s_v^2.
		\end{align}
		As $B$ is bipartite but none of the $F_i$ are bipartite, it follows that $G\setminus B$ contains an edge of every $F_i$, and hence
		\begin{align}\label{eq:boundexbip}
			\ex_{C_5}(C_3,n) \le e(G)-e(B).
		\end{align}
		The upper bound on $\ex_{C_5}(C_3,n) $ follows by combining~\eqref{eq:diffGandB} and~\eqref{eq:boundexbip}. The upper bound on $\sum_v s_v^2$ follows from the fact that $\ex_{C_5}(C_3,n)  \ge t(n) = q^2+q \mathds{1}_{r \in \{3,4\}}$, by~\eqref{eq:lower-bound-extremal}.
	\end{proof}

	\section{The structure of extremal graphs}

	In this section, we show that $G$ must be close to a blow-up of $C_5$.
	Moreover, the vertex partition given by the blow-up structure is approximately   balanced. 
	At the end of the section, we refine this structure when restricting our graph to the subgraph spanned by vertices with degree close to the average.
	
	\subsection{The approximate structure}
	
	To deduce the approximate structure of the extremal graphs, we shall need the triangle removal lemma due to~Ruzsa and Szemerédi~\cite{RSz}.
	
	\begin{lemma}[Triangle Removal Lemma]\thlabel{cor:maketrianglefree}
		For every $\delta > 0$, there exists $n_\delta \in \mathbb{N}$ and $f(\delta)>0$ such that the following holds for every $n> n_\delta$.
		If $H$ has at most $f(\delta)n^3$ triangles, then we can make $H$ triangle-free by deleting at most $\delta n^2$ edges.
	\end{lemma}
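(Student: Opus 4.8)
The plan is to prove this via the Szemer\'edi Regularity Lemma, following the classical Ruzsa--Szemer\'edi argument. First I would fix $\delta>0$, set an auxiliary density threshold $d := \delta/4$, and choose a regularity parameter $\eps = \eps(\delta)>0$ small enough that, say, $2\eps + d \le \delta/2$. Applying the Regularity Lemma with parameter $\eps$ yields a constant $M = M(\delta)$ such that every sufficiently large graph $H$ on $n$ vertices admits an $\eps$-regular equipartition $V(H) = V_1 \cupdot \cdots \cupdot V_m$ with $1/\eps \le m \le M$.

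Next I would carry out the standard cleaning step to obtain a subgraph $H' \subseteq H$: delete every edge lying inside some part $V_i$; every edge between a pair $(V_i,V_j)$ that is not $\eps$-regular; and every edge between a pair $(V_i,V_j)$ of density less than $d$. The number of deleted edges is at most $m\binom{n/m}{2} + \eps\binom{m}{2}(n/m)^2 + d\binom{m}{2}(n/m)^2 \le (\eps + \eps + d)\tfrac{n^2}{2} \le \delta n^2$ for $n$ large, using $m \ge 1/\eps$ and the definition of an $\eps$-regular partition.

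Then I would argue that $H'$ is triangle-free. Suppose instead that $xyz$ is a triangle of $H'$, with $x\in V_i$, $y\in V_j$, $z\in V_k$. Since no edges survive inside parts, $i,j,k$ are distinct, and by construction each of the three pairs among $V_i,V_j,V_k$ is $\eps$-regular of density at least $d$. The triangle counting lemma for regular triples then shows that $H$ contains at least $(1-2\eps)d^3(n/m)^3 \ge c(\delta) n^3$ triangles for some $c(\delta)>0$ depending only on $d$, $\eps$, and $M$. Choosing $f(\delta) := c(\delta)/2$ contradicts the hypothesis that $H$ has at most $f(\delta)n^3$ triangles; hence $H'$ is triangle-free, and we have removed at most $\delta n^2$ edges.

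The only real obstacle is bookkeeping with the order of quantifiers: $f(\delta)$ must be chosen \emph{after} $\eps$ and $M$ have been fixed, since $c(\delta)$ degrades with $M$ (which is of tower type in $1/\eps$), while $\eps$ itself must be small relative to $d \sim \delta$; and one must make sure the few ``atypical'' vertices in a regular pair cannot wipe out the triangle count, which is exactly the content of the counting lemma. Beyond that, nothing is deep: the entire argument runs on the Regularity Lemma as a black box together with the elementary counting lemma for regular triples.
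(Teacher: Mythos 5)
The paper does not prove this lemma; it is stated as a known result of Ruzsa and Szemer\'edi (cited, not reproved) and then used as a black box, so there is no in-paper argument to compare against. Your proof is the standard regularity-lemma derivation of the triangle removal lemma and is essentially correct: regularize, clean away intra-part edges, edges in irregular pairs, and edges in sparse pairs (at most $\delta n^2$ in total for your choice of $\eps$ and $d$), and observe that a surviving triangle would lie across a triple of $\eps$-regular pairs of density at least $d$, whence the counting lemma forces a positive constant times $n^3$ triangles in $H$. One cosmetic slip: the counting lemma yields at least $(1-2\eps)(d-\eps)^3(n/m)^3$ triangles, not $(1-2\eps)d^3(n/m)^3$; since your parameters satisfy $\eps < d$ this does not affect the conclusion. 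Your quantifier bookkeeping --- fix $d$ and $\eps$ first, then the tower-type $M(\eps)$, and only then $f(\delta)$ --- is exactly the point one needs to get right.
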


	Another tool that we need is the following stability theorem of Erd\H{o}s, Gy\H{o}ri and Simonovits~\cite{erdHos1992many}.

	\begin{theorem}[Erd\H{o}s--Gy\H{o}ri--Simonovits]\thlabel{egs}
		For every $\beta >0$ there exists $\gamma=\gamma(\beta) > 0$ and $n_\beta$ such that if $H$ is an $n$-vertex triangle-free graph with $n>n_\beta$  and $e(H)\ge n^2/5-\gamma n^2$, then $H$ can be turned into a subgraph of a blow-up of $C_5$ by deleting at most $\beta n^2$ edges.
	\end{theorem}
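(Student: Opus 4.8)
I would prove the contrapositive: given $\beta>0$, I must produce $\gamma=\gamma(\beta)>0$ and $n_\beta$ so that every triangle-free graph $H$ on $n>n_\beta$ vertices which is \emph{$\beta n^2$-far} from every blow-up of $C_5$ (i.e.\ one cannot make $H$ a subgraph of a blow-up of $C_5$ by deleting at most $\beta n^2$ edges) satisfies $e(H)<n^2/5-\gamma n^2$. Observe first that, since every bipartite graph is a degenerate blow-up of $C_5$, such an $H$ is in particular $\beta n^2$-far from bipartite; I will keep this as a standing assumption. It is convenient to reduce to bounded structures: after applying Szemer\'edi's regularity lemma and passing to the reduced graph $R$ on the $M$ clusters, whose edges are the $\eps$-regular pairs of density at least $d$, the triangle counting lemma makes $R$ triangle-free, routine estimates give $e(H)\le(n/M)^2 e(R)+\eta n^2$ with $\eta=\eta(\eps,d)$ arbitrarily small, and lifting a $5$-partition of $V(R)$ back to $V(H)$ shows $R$ is itself far from every blow-up of $C_5$, up to a controlled loss. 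Hence it suffices to prove the statement for bounded-size graphs.

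The heart of the matter is a structural dichotomy for dense triangle-free graphs, which is precisely the content of the Andr\'asfai--Erd\H{o}s--S\'os theorem and its quantitative refinements: there is an absolute $\eps_0>0$ such that every triangle-free graph $G$ with minimum degree at least $(2/5-\eps_0)\,v(G)$ admits a homomorphism to $C_5$, and is therefore a subgraph of a blow-up of $C_5$. To exploit this, repeatedly delete from $H$ any vertex whose degree in the current graph is below $(2/5-\eps_0)$ times the current order; this yields a triangle-free $H^\ast\subseteq H$ that is either empty or has minimum degree at least $(2/5-\eps_0)\,v(H^\ast)$, while deleting at most $(2/5)\,n\,(n-v(H^\ast))$ edges. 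In the latter case $H^\ast$ embeds into a blow-up of $C_5$ with parts $A_1^\ast,\dots,A_5^\ast$, and it remains to (i) argue that $v(H^\ast)$ is almost all of $V(H)$ — otherwise $H$ has linearly many low-degree vertices, and a counting argument exploiting triangle-freeness (neighbourhoods are independent, so the removed part carries relatively few edges) must be pushed through to conclude that either $e(H)<n^2/5-\gamma n^2$ already, or $H$ is close to bipartite, the latter being excluded by assumption — and then (ii) insert the few leftover vertices into the parts $A_i^\ast$ greedily, once more using triangle-freeness to bound how many bad edges each one creates, so that the resulting $5$-partition of $V(H)$ certifies that $H$ is $\beta n^2$-close to a blow-up of $C_5$, contradicting the hypothesis.

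The main obstacle is exactly step (i): ruling out a ``third structure'', namely a triangle-free graph with roughly $n^2/5$ edges that is simultaneously far from bipartite and has no large subgraph of minimum degree near $2n/5$. This is where the genuine work of Erd\H{o}s--Gy\H{o}ri--Simonovits lies; it hinges on the fine classification of triangle-free graphs whose minimum degree lies between the Andr\'asfai threshold $n/3$ and $2n/5$, together with the quantitative fact that, unlike complete bipartite graphs, none of these graphs admits a near-balanced blow-up dense enough to have $n^2/5-\gamma n^2$ edges. A cleaner packaging that avoids the vertex-deletion bookkeeping is to pass to the graph limit at the outset and invoke the structure theory of triangle-free graphons, deducing that a triangle-free graphon of edge density at least $2/5$ lies within small cut-distance of a $C_5$-blow-up graphon.
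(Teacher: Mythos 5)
The statement you were asked to prove is \thref{egs}, but the paper does not prove it: it is quoted verbatim as a theorem of Erd\H{o}s, Gy\H{o}ri and Simonovits and cited from~\cite{erdHos1992many}, then used as a black box in the proof of \thref{cor:roughpentagon-like}. So there is no ``paper's own proof'' to compare your attempt against; what follows is an assessment of your sketch on its own terms.

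Your general plan is in the right territory --- iteratively delete low-degree vertices, invoke an Andr\'asfai--Erd\H{o}s--S\'os--type minimum-degree theorem on the remaining core, then reinsert the deleted vertices --- but you yourself flag step~(i) as ``the main obstacle'' and then do not close it, and the gap is genuine, not cosmetic. Here is why the obvious edge-count does not finish it. If $t$ vertices are deleted and $s=t/n$, the number of edges removed is at most $(2/5-\eps_0)\sum_{i<t}(n-i)\approx(2/5-\eps_0)(sn^2-s^2n^2/2)$, and the remaining core $H^{\ast}$, being triangle-free, satisfies only $e(H^{\ast})\le (n-t)^2/4$ by Mantel (not $(n-t)^2/5$, since $H^{\ast}$ being $C_5$-homomorphic does not preclude it from being nearly complete bipartite). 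Adding these, for small $s$ the bound is roughly $n^2/4$, which is far above $n^2/5-\gamma n^2$, so the hypothesis $e(H)\ge n^2/5-\gamma n^2$ gives no contradiction, and reinserting $t>\beta n$ vertices can create too many non-blow-up edges. The structures you must actually rule out are blow-ups of higher Andr\'asfai graphs $\text{And}(k)$, $k\ge 3$: these are triangle-free, far from bipartite, not homomorphic to $C_5$, and have minimum degree below the threshold, so the iterated deletion can eat them entirely. What saves the day, and what constitutes the real content of the Erd\H{o}s--Gy\H{o}ri--Simonovits theorem, is the extremal fact that any blow-up of $\text{And}(k)$, $k\ge 3$, with $n^2/5-\gamma n^2$ edges must already be within $o(n^2)$ of a degenerate blow-up supported on a $K_2$- or $C_5$-subgraph of $\text{And}(k)$, hence a $C_5$-blow-up after all; you gesture at this (``none of these graphs admits a near-balanced blow-up dense enough'') without proving it, and that is where the proof is missing. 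Two smaller remarks: the minimum-degree threshold you want to cite is the H\"aggkvist/Jin $3n/8$ threshold for $C_5$-homomorphism rather than the $2n/5$ bipartiteness threshold of Andr\'asfai--Erd\H{o}s--S\'os (your ``$(2/5-\eps_0)n$'' works only because $2/5-\eps_0>3/8$ for $\eps_0<1/40$); and the regularity-lemma reduction in your first paragraph is a red herring here, since the deletion argument and edge-counting are carried out directly on $H$.
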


	Now we  deduce our first structural property on extremal multicolored triangle-free graphs, which states that they  are close to being a blow-up of $C_5$.

	\begin{lemma}\thlabel{cor:roughpentagon-like}
			For every $\eps > 0$ there exists $n_\eps \in \mathbb{N}$ such that for every $n > n_\eps$ and an
		 extremal multicolored $n$-vertex triangle-free graph $H$, 
		 $H$ can be turned into a subgraph of a blow-up of $C_5$ by deleting at most $\eps n^2$ edges.
	\end{lemma}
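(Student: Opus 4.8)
The plan is to combine the triangle removal lemma (\thref{cor:maketrianglefree}) with the Erdős--Győri--Simonovits stability theorem (\thref{egs}), feeding them the two quantitative facts about $H$ already established in Section~2: that $H$ has few triangles and that it has many edges.

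First I would observe that, since $H$ is an extremal multicolored triangle-free graph, \thref{lemma:edgesinsideneighborhood} applies to it and yields that $H$ contains at most $e(H)\le n^2/2$ triangles. As $n^2/2$ is subcubic, for any $\delta>0$ and all sufficiently large $n$ we have $n^2/2\le f(\delta)n^3$, so \thref{cor:maketrianglefree} allows us to delete at most $\delta n^2$ edges of $H$ to obtain a triangle-free graph $H'$. Next, since $E(H)$ is partitioned into $\ex_{C_5}(C_3,n)$ edge-disjoint copies of $C_5$, we have $e(H)=5\,\ex_{C_5}(C_3,n)\ge 5\,t(n)\ge n^2/5-2n$ by~\eqref{eq:lower-bound-extremal}; hence $e(H')\ge n^2/5-2n-\delta n^2$.

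To finish, given $\eps>0$ I would first set $\beta:=\eps/2$ and let $\gamma=\gamma(\beta)>0$ be the constant supplied by~\thref{egs}; then I would choose $\delta:=\min\{\eps/2,\ \gamma/2\}$ and take $n_\eps$ large enough that the two steps above are valid and that $2n\le \tfrac{\gamma}{2}n^2$. With these choices $e(H')\ge n^2/5-\gamma n^2$, so~\thref{egs} lets us delete at most $\beta n^2$ further edges to turn $H'$, and therefore $H$, into a subgraph of a blow-up of $C_5$; in total at most $\delta n^2+\beta n^2\le \eps n^2$ edges are removed.

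I do not expect a genuine obstacle here: the content is carried by \thref{lemma:edgesinsideneighborhood}, which makes the number of triangles subcubic so that the removal lemma applies, and by the lower bound~\eqref{eq:lower-bound-extremal}, which ensures that enough edges survive the removal step for the stability theorem to take effect. The only point requiring attention is the order in which the constants are chosen --- $\beta$ first, then $\gamma$, then $\delta$ and $n_\eps$ --- since $\delta$ has to be small relative to the $\gamma$ produced by~\thref{egs}.
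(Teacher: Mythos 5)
Your proposal is correct and follows essentially the same route as the paper's proof: both use \thref{lemma:edgesinsideneighborhood} to make the triangle count subcubic, the lower bound~\eqref{eq:lower-bound-extremal} to show $e(H)\ge n^2/5-2n$, apply the triangle removal lemma, and then invoke~\thref{egs}, choosing constants in the same order ($\beta$, then $\gamma$, then $\delta$). The only cosmetic difference is the exact form of $\delta$: you take $\delta=\min\{\eps/2,\gamma/2\}$ and apply removal with $\delta$, while the paper takes $\delta=\min\{\eps/2,\gamma\}$ and applies removal with $\delta/2$, yielding the same bound.
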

	\begin{proof}
	In the proof we assume that $n$ is sufficiently large, in order that all previous lemmas are applicable. 
		For a given $\eps >0$ set $\beta:=\eps/2$ and let $\gamma=\gamma(\beta) > 0$ be given by~\thref{egs}. Set $\delta:=\min\{\beta,\gamma\}$.
		By~\thref{lemma:edgesinsideneighborhood}, there are at most $n^2/2 = o(n^3)$ triangles in $H$,  hence we can use the Triangle Removal Lemma (Lemma~\ref{cor:maketrianglefree}).
		As $\ex_{C_5}(C_3,n) \ge t(n) \ge n^2/25 -2n/5$ by~\eqref{eq:lower-bound-extremal}, we have $e(H) \ge n^2/5 - 2n$.
		 ~\thref{cor:maketrianglefree} applied with $\delta/2$ implies that
		there exists a triangle-free subgraph $H' \se H$ such that 
		\begin{align*}
			e(H')&\ge n^2/5-2n- \delta n^2/2 \ge 
			n^{2}/5-\gamma n^2,
		\end{align*}
		for $n$ large enough and by the choice of $\delta.$ Thus, $H'$ can be turned into a subgraph of a blow-up of $C_5$ by deleting at most $\beta n^2$ edges,  by~\thref{egs}. 
		Together, these imply that there exists  $H''\se G$ which is a subgraph  of a blow-up of $C_5$  and such that $e(H'')\ge e(H)-(\beta+\delta) n^2 \ge e(H)-\eps n^2$.
	\end{proof}

	From now on, we denote by $A_1,\ldots, A_5$ the disjoint sets corresponding to the partition of $V(G)$ given by a subgraph of a blow-up of $C_5$ as in~\thref{cor:roughpentagon-like}.
	We denote by $C_5(A_1,\ldots,A_5)$ the graph which is a blow-up of  $C_5$, with parts $A_1,\ldots,A_5$.
	Moreover, we assume that the intersection of $G$ with the blow-up $C_5(A_1,\ldots,A_5)$ gives a subgraph of a blow-up of $C_5$ in $G$ with the maximum number of edges. Recall that for $i\in[5]$, we denote by $G(A_i,A_{i+1})$ the bipartite subgraph of $G$ on vertex set $A_i\cup A_{i+1}$ and edges $uv\in E(G)$ where $u\in A_i$ and $v\in A_{i+1}$. 
	
	Our next lemma states that $A_1,\ldots,A_5$ is close to being an equipartition and that each $G(A_i,A_{i+1})$ is close to being a complete bipartite graph.
	
	\begin{lemma}\thlabel{lemma:sizeofAis}
		For every $\eps > 0$ there exists $n_0 \in \mathbb{N}$ such that if $n > n_0$, then the following holds.
		For every $i \in [5]$ we have 
		\begin{align*}
			 e_G(A_i,A_{i+1})\ge n^2/25 - \eps n^2 \qquad
			\text{and}
			\qquad
			n/5 - \eps n \le |A_i| \le  n/5 + \eps n.
		\end{align*}
	\end{lemma}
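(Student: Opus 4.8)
The plan is to combine the rough structure from \thref{cor:roughpentagon-like} with a simple observation about how a monochromatic copy of $C_5$ can sit inside a blow-up of $C_5$.

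Given the target $\eps$, I would first apply \thref{cor:roughpentagon-like} with an auxiliary parameter $\eps_1 = \eps_1(\eps) > 0$, to be fixed at the very end, and take $A_1,\ldots,A_5$ to be the resulting partition of $V(G)$ (the one maximising $\sum_i e_G(A_i,A_{i+1})$, as fixed after that lemma); then at most $\eps_1 n^2$ edges of $G$ are \emph{bad}, where an edge is bad if it lies in none of the bipartite graphs $G(A_i,A_{i+1})$. The key point is that a colour class $F_j$ (a copy of $C_5$) all of whose five edges are non-bad must meet each part $A_i$ in exactly one vertex. Indeed, recording the indices of the parts containing the consecutive vertices of $F_j$ produces a closed walk of length $5$ in the base cycle $\mathbb{Z}/5\mathbb{Z}$ all of whose steps are $\pm 1$; the number of $+1$-steps minus the number of $-1$-steps is odd and must be $\equiv 0 \pmod 5$, hence equals $\pm 5$, so all steps agree, and therefore $F_j$ winds exactly once around the blow-up and contributes exactly one edge to each $G(A_i,A_{i+1})$. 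Since at most $\eps_1 n^2$ colour classes contain a bad edge, and distinct colour classes contribute distinct edges (being edge-disjoint), we obtain, for every $i \in [5]$,
\[
 e_G(A_i,A_{i+1})\ \ge\ k - \eps_1 n^2\ \ge\ t(n) - \eps_1 n^2\ \ge\ \frac{n^2}{25} - \frac{2n}{5} - \eps_1 n^2,
\]
where $k = \ex_{C_5}(C_3,n)$ and we used \eqref{eq:lower-bound-extremal}; for $\eps_1 \le \eps/2$ and $n$ large this is at least $n^2/25 - \eps n^2$, which is the first claim.

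For the size bounds, I would combine $e_G(A_i,A_{i+1}) \le |A_i|\,|A_{i+1}|$ with the inequality just proved to get $|A_i|\,|A_{i+1}| \ge n^2/25 - \eps n^2$ for every $i$; in particular $|A_i| \ge n/25 - \eps n$ for every $i$ (using $|A_{i+1}| \le n$), so no part is too small, and hence none is too large either, since $\sum_i |A_i| = n$. Multiplying the five product inequalities and taking a square root yields $\prod_i |A_i| \ge (n/5)^5 (1 - 25\eps)^{5/2}$, whereas AM--GM gives $\prod_i |A_i| \le (n/5)^5$. This places us in the near-equality regime of AM--GM: writing $|A_i| = (n/5)(1+t_i)$ with $\sum_i t_i = 0$ and each $t_i$ in a fixed compact subinterval of $(-1,\infty)$ (guaranteed by the bounds $n/25 - \eps n \le |A_i| \le 21n/25 + 4\eps n$), the estimate $\log(1+t) \le t - c\,t^2$ valid on that interval gives $c\sum_i t_i^2 \le -\tfrac52 \log(1-25\eps) = O(\eps)$, so $\max_i t_i^2 = O(\eps)$ and hence $\bigl|\, |A_i| - n/5 \,\bigr| = O(\sqrt{\eps})\,n$ for every $i$. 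Choosing $\eps_1$, and the threshold on $\eps$ used inside this last estimate, small enough in terms of the target $\eps$ makes this quantity at most $\eps n$, which completes the argument.

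The only genuinely new ingredient is the alignment observation of the second paragraph; everything else is the removal/stability input already packaged in \thref{cor:roughpentagon-like} together with routine estimates, so I do not expect a serious obstacle. The one point requiring mild care is keeping the $t_i$ bounded away from $-1$ before invoking the logarithmic inequality, but this is automatic: the product lower bounds force every part to satisfy $|A_i| \ge n/25 - o(n)$.
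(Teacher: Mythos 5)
Your proof is correct. The first half, showing $e_G(A_i,A_{i+1})\ge n^2/25-\eps n^2$, follows the same route as the paper: pass to $H=G\cap C_5(A_1,\ldots,A_5)$, observe that at most $\eps_1 n^2$ colour classes contain a removed edge, and use the fact that a $C_5$ fully contained in a blow-up of $C_5$ must wind once around it and hence deposits exactly one edge in each $G(A_i,A_{i+1})$; the paper leaves this winding observation implicit, whereas you spell out the parity argument. For the size bounds your route is genuinely different. The paper supposes $|A_1|=n/5-\alpha n$, uses the edge bound to force $|A_2|,|A_5|\ge n/5+\tfrac{2}{3}\alpha n$, hence $|A_3|+|A_4|\le 2n/5-\alpha n/3$, and derives a contradiction from $|A_3||A_4|$; this gives a linear trade-off between the edge error and the size error. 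You instead multiply the five product inequalities, compare with AM--GM, and run a near-equality analysis via $\log(1+t)\le t-ct^2$ on a compact interval, landing on a square-root trade-off $\max_i|t_i|=O(\sqrt{\eps})$. Both are sound; the paper's is quantitatively sharper, yours is perhaps conceptually cleaner, and since the auxiliary parameter can be taken smaller the square-root loss is immaterial for the statement. You correctly flag both this final re-parametrisation and the need to keep the $t_i$ bounded away from $-1$, which the crude bound $|A_i|\ge n/25-\eps n$ secures.
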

	
	\begin{proof} For a given $\eps>0$ let $\delta = \eps/2^{11}$ and assume without loss of generality that $n$ is large enough such that $H \se C_5(A_1,\ldots,A_5)$ is a subgraph of $G$ with at least $e(G)-\delta n^2$ edges (the existence of such  $H$ follows from~\thref{cor:roughpentagon-like}).
	This implies that there are at least $e(G)/5-\delta n^2\geq n^2/25-5\delta n^2$ monochromatic copies of $C_5$ in $H$, and hence
		\begin{align}\label{eq:edgesinbetween}
			|A_i||A_{i+1}|\ge e_G(A_i,A_{i+1})\ge n^2/25 - 5\delta n^2
		\end{align}
		 for every $i \in [5]$.

		Now, suppose  that $|A_1| = n/5 - \alpha n $ for some $\alpha \in ( 2^9\delta, 5^{-1})$.
		From~\eqref{eq:edgesinbetween}, we obtain
		\begin{align}\label{eq:sizeofA-1}
		\nonumber	|A_{2}|, |A_{5}| &\ge \dfrac{n^2/25 -5\delta n^2}{n/5 - \alpha n} \ge \dfrac{n^2/25 -5\delta n^2}{n/5}\cdot (1+5\alpha ) =
			 \frac{n}{5} +(\alpha  -25\delta  -125\delta \alpha) n \ge \frac{n}{5}  +\frac{2\alpha n}{3}.
		\end{align}
		Thus, we have
		\[|A_3|+|A_4| = n- |A_1|-|A_2|-|A_5| \le 2n/5 - \alpha n /3.\]
		This implies
		\begin{align*}
			|A_3||A_4| &\le (n/5 -\alpha n /6)^2 = n^2/25 -\alpha n^2/15+\alpha^2 n^2/36 \le n^2/25 -\alpha n^2/30,
		\end{align*} 
		which contradicts~\eqref{eq:edgesinbetween}, as $\alpha/30 > 6 \delta$, proving
	 $|A_i|\ge n/5-2^9\delta n$ for every $i \in [5]$. From this we conclude that $|A_i|\le n-4(n/5-2^9\delta n)\le n/5 +2^{11}\delta n$ for every $i \in [5]$. The lemma follows by choice of $\delta.$
	\end{proof}

	Define $N_{i}(v)$ to be the set of neighbors of $v$ in $G$ which are contained in $A_i$ and let $d_i(v) = |N_{i}(v)|$.
	We refer to the edges not in  $\bigcup_i E_G(A_i,A_{i+1})$ as \emph{unstructured} edges.
	Observe that, as $G \cap C_5(A_1,\ldots,A_5)$ gives a subgraph of a blow-up of $C_5$ in $G$ with the maximum number of edges, the number of unstructured edges is minimum over all subgraphs of a blow-up of $C_5$ in $G$.
	Our ultimate goal is to show that the number of unstructured edges is at most linear.

	Our next lemma gives an upper bound on the degree and the number of unstructured edges incident to each vertex of $G$.

	\begin{lemma}\thlabel{lemma:smallcrossdegree}
		For every $\delta > 0$ there exists $n_\delta \in \mathbb{N}$ such that if $n > n_\delta$, then the following holds.
		For every $i \in [5]$, $v \in A_i$ and $j \notin \{i-1,i+1\}$, we have
		\begin{align}\label{eq:bound-dj-dv}
		d(v)\le 2n/5+\delta n \qquad \text{and}\qquad d_j(v)\le \delta n.
		\end{align}
		Moreover, for $t \in \{i-1,i+1\}$ we have 
		\begin{align}\label{eq:lower-bound-degree}
			d_t(v)\ge d(v)-\dfrac{n}{5}-\delta n.
		\end{align}
	\end{lemma}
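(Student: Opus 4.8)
The plan is to fix a vertex $v\in A_i$ and analyse the five quantities $a_j:=d_{i+j}(v)$, $j\in\{0,1,2,3,4\}$ (indices mod $5$), for which $d(v)=a_0+a_1+a_2+a_3+a_4$ and where $a_4$ and $a_1$ count the neighbours of $v$ in the two ``correct'' parts $A_{i-1}$ and $A_{i+1}$. I will bound $a_0,a_2,a_3$ by $O(\sqrt{\eps}\,n)$ by combining a density estimate coming from \thref{lemma:edgesinsideneighborhood} with a one-vertex exchange argument exploiting the maximality of $A_1,\dots,A_5$; the three displayed inequalities then fall out. Throughout I fix $\eps>0$ small in terms of $\delta$ and, using \thref{cor:roughpentagon-like} and \thref{lemma:sizeofAis}, assume $n$ is large enough that $n/5-\eps n\le|A_k|\le n/5+\eps n$ and $e_G(A_k,A_{k+1})\ge n^2/25-\eps n^2$ for every $k$; in particular $G$ has at most $2\eps n^2$ non-edges between any two consecutive parts.

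For the \textbf{density estimate}: for each $j$, since $A_{i+j}$ and $A_{i+j+1}$ are consecutive parts, the number of edges of $G$ between $N_{i+j}(v)$ and $N_{i+j+1}(v)$ is at least $a_ja_{j+1}-2\eps n^2$; but all those edges lie inside $N_G(v)$, so by \thref{lemma:edgesinsideneighborhood} there are at most $3d(v)/2\le 3n/2$ of them, whence $a_ja_{j+1}\le 3\eps n^2$ for every $j$ once $n$ is large. For the \textbf{exchange inequalities}: since $A_1,\dots,A_5$ was chosen so that $G\cap C_5(A_1,\dots,A_5)$ has the maximum possible number of edges, equivalently so that the number of structured edges of $G$ is maximum over all $5$-partitions of $V(G)$, and since relocating the single vertex $v$ changes only the status of edges incident to $v$, moving $v$ from $A_i$ to $A_{i+1}$, $A_{i+2}$, or $A_{i+3}$ (where it would then have $a_0+a_2$, $a_1+a_3$, or $a_2+a_4$ structured edges respectively, against $a_1+a_4$ now) cannot increase the count, so
\begin{align*}
a_1+a_4\ \ge\ a_0+a_2,\qquad a_1\ \ge\ a_2,\qquad a_4\ \ge\ a_3.
\end{align*}

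Combining the two, from $a_1\ge a_2$ and $a_1a_2\le 3\eps n^2$ we get $a_2^2\le 3\eps n^2$, i.e.\ $a_2\le 2\sqrt\eps\,n$, and symmetrically $a_3\le 2\sqrt\eps\,n$. For $a_0$: if $\max\{a_1,a_4\}\ge\sqrt{3\eps}\,n$ then $a_0\le 3\eps n^2/\max\{a_1,a_4\}\le 2\sqrt\eps\,n$ by the density estimate applied to $a_0a_1$ or $a_0a_4$, while otherwise $a_0\le a_1+a_4\le 2\sqrt{3\eps}\,n$ by the first exchange inequality. Hence $a_0,a_2,a_3\le 4\sqrt\eps\,n$, which is at most $\delta n$ once $\eps$ is chosen small enough in terms of $\delta$; this is the second bound in \eqref{eq:bound-dj-dv}. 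Then
\begin{align*}
d(v)=a_0+a_1+a_2+a_3+a_4\ \le\ a_1+a_4+12\sqrt\eps\,n\ \le\ |A_{i+1}|+|A_{i-1}|+12\sqrt\eps\,n\ \le\ \tfrac{2n}{5}+\delta n,
\end{align*}
the first bound in \eqref{eq:bound-dj-dv}, and, for $t=i+1$,
\begin{align*}
d_{i+1}(v)=d(v)-a_0-a_2-a_3-a_4\ \ge\ d(v)-12\sqrt\eps\,n-|A_{i-1}|\ \ge\ d(v)-\tfrac n5-\delta n,
\end{align*}
with the case $t=i-1$ symmetric; this is \eqref{eq:lower-bound-degree}. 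All three bounds come with error $O(\sqrt\eps\,n)$ and an absolute implied constant, so it suffices to pick $\eps$ a sufficiently small function of $\delta$ at the outset.

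The density estimate and the juggling of constants are routine once \thref{lemma:edgesinsideneighborhood} and \thref{lemma:sizeofAis} are in hand. I expect the crux to be the exchange step: one has to notice that the \emph{global} optimality of the partition already forces the local inequalities $a_1\ge a_2$ and $a_4\ge a_3$ at \emph{every} vertex---including atypical, low-degree ones---which is exactly what lets us control $a_2,a_3$ (and then $a_0$) without any lower bound on $a_1$ or $a_4$ being known in advance.
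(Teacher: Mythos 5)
Your proof is correct, and it uses the same two ingredients as the paper's argument — a density estimate (derived from \thref{lemma:edgesinsideneighborhood} together with \thref{lemma:sizeofAis}) forcing the products $d_j(v)d_{j+1}(v)$ to be small, and the optimality of the partition $A_1,\ldots,A_5$ under single-vertex relocation — but combines them more directly. The paper first deduces from the density estimate alone that $\min\{d_j(v),d_{j+1}(v)\}\le\eps n$ for every $j$, hence that at least three of the five degrees $d_1(v),\ldots,d_5(v)$ are tiny (the set $S$), derives the degree bound from $|S|\ge 3$, and then runs a two-case exchange argument (splitting on whether both of $d_{i-1}(v),d_{i+1}(v)$ exceed $3\eps n$) to show that the three small indices are exactly $\{i,i+2,i+3\}$, i.e.\ that the possibly-large neighbourhoods straddle the part containing $v$. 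You instead read off the explicit monotonicity inequalities $a_1\ge a_2$, $a_4\ge a_3$, $a_1+a_4\ge a_0+a_2$ directly from the exchange step — valid for \emph{every} vertex, with no preliminary assumption $d(v)>\gamma n$ — and feed them into the product bound $a_ja_{j+1}\le 3\eps n^2$ to obtain $a_0,a_2,a_3=O(\sqrt\eps\,n)$ in one pass, from which all three displayed inequalities follow. This bypasses the case analysis and the detour through $S$, at the mild cost of a $\sqrt\eps$ rather than linear error term; both are absorbed into $\delta n$ by the initial choice of $\eps$.
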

	\begin{proof}
		Let $\gamma < \delta/4$ be a small enough constant and set $\eps = \gamma/18$. 
		First, apply Lemma~\ref{lemma:sizeofAis} with parameter $\eps^2/2$ to obtain 
		\begin{align}\label{eq:L3Pt4-applied}
			e_G(A_i,A_{i+1}) &\ge n^2/25 -\eps^2 n^2/2 \text{ and } |A_i|\le n/5+\eps^2 n/2 \text{ for all } i\in[5].
		\end{align}
		Let us now argue that for every $i \in [5]$ and $v \in V(G)$, we have
		\begin{align}\label{eq:usefuleq}
			\min \big \{ d_i(v),d_{i+1}(v)\big\}\le \eps n
		\end{align}
		for $n$  sufficiently  large. 
		Suppose for contradiction that~\eqref{eq:usefuleq} does not hold.
		Since $e(G[N(v)])\le 3n/2$ by Lemma~\ref{lemma:edgesinsideneighborhood}, there are at most $3n/2$ edges $uw$ with $u \in N_i(v)$ and $w \in N_{i+1}(v)$. This implies that the number of edges between $A_i$ and $A_{i+1}$ is at most
		\begin{align*}
			& |A_i||A_{i+1}|-d_i(v)d_{i+1}(v)+3n/2 \le |A_i||A_{i+1}| - \eps^2 n^2+3n/2.
		\end{align*}
		Thus, using~\eqref{eq:L3Pt4-applied} twice, we obtain 
		\begin{align*}
			\frac{n^2}{25} - \frac{\eps^2 n^2}{2} 
			&\le e_G(A_i,A_{i+1}) \le \left (\frac{n}{5}+\frac{\eps^2 n}{2} \right )^2- \eps^2 n^2+\frac{3n}{2}\\
			&= \frac{n^2}{25}+\frac{\eps^2 n^2}{5}+\frac{\eps^4 n^2}{4}-\eps^2 n^2+\frac{3n}{2} 
			 < \frac{n^2}{25}-\frac{\eps^2 n^2}{2}
		\end{align*}
		for $n$  sufficiently  large, a contradiction. 
		Thus, we conclude that ~\eqref{eq:usefuleq} holds.
		
		Now, fix a vertex $v \in V(G)$.
		If $d_G(v)\le \gamma n$, then the condition $d_j(v)\le \delta n$ is trivially satisfied.
		Thus, let us assume that $d_G(v)> \gamma n$.
		From~\eqref{eq:usefuleq} it follows that the set 
		\[S:= \big\{j\in [5]: d_j(v)\le \eps n\big\}\]
		has size at least three, in particular
		 there exists $i\in [5]$ such that $\{i,i+2,i+3\}\se S$.
		Therefore, $v$ can have a large neighborhood only inside the union $A_{i-1}\cup A_{i+1}$. Note that this implies in particular that 
		$$ d(v) \le |A_{i-1}|+|A_{i+1}|+3\eps n \le \frac{2n}{5} + \eps^2 n + 3\eps n \le 2n/5 +\delta n,$$ 
		by~\eqref{eq:L3Pt4-applied}, choice of $\eps$, and for $n$ large enough. That is, the first part of~\eqref{eq:bound-dj-dv} is proved.

		We claim that if $d_{i-1}(v)> 3\eps n$ and $d_{i+1}(v)> 3\eps  n$, then $v\in A_i$.
		Indeed, as $d_i(v)+d_{i+2}(v)+d_{i+3}(v)\le 3\eps n$,
		if we had $v \notin A_i$, then we could move $v$ to $A_i$ and obtain a subgraph of a blow-up of $C_5$ with more edges, a contradiction. In particular, this implies the second part of~\eqref{eq:bound-dj-dv} in this case.  
				 
		Now assume that either $d_{i-1}(v)\le 3\eps n$ or $d_{i+1}(v)\le 3\eps n$.
		As $\{i,i+2,i+3\}\se S$, in both cases we have that there exists an index $k$ for which $d_k(v)\ge d(v)- 6\eps n>12\eps n$, using our assumption that $d_G(v) > \gamma n = 18\eps n$, 
		and $d_j(v)\le 3\eps n$ for every $j \neq k$. 
		As the set $A_k$ contains most of the neighbors of $v$, we must have either $v \in A_{k-1}$ or $v \in A_{k+1}$, otherwise we could move $v$ to one of these sets and obtain a subgraph of a blow-up of $C_5$ with more edges.
		In both cases, we have that if $v \in A_i$ then $d_j(v)\le \delta n$ for every $j \notin \{i-1,i+1\}$, i.e.~\eqref{eq:bound-dj-dv} holds.

		Finally, assuming $v\in A_i$, for some $i\in [5]$, we have for $t \in \{i-1,i+1\}$ that 
		\[d_t(v)\ge d(v)-\max_j |A_j|-3\eps n \ge d(v)-n/5-\delta n,\]
		where we used the fact that $|S|\ge 3$, \eqref{eq:L3Pt4-applied}, and the choice of $\eps$. This completes our proof.
	\end{proof}

\subsection{Cleaning the graph}

	In this section, we obtain a refinement of Lemma~\ref{lemma:smallcrossdegree} for `good' vertices in $G$. Define the set of \emph{good} vertices to be
	\[V_g := \big\{v \in [n]: d_G(v)\ge 7n/20\big\}.\]
	Observe that in a balanced blow-up of $C_5$, each vertex has degree approximately $2n/5$, i.e., every vertex is good.
	Recall that an unstructured edge is an edge of $G$ which is not contained in $\bigcup_i E_G(A_i,A_{i+1})$.
	Let $L$ be the set of unstructured edges with both endpoints are  in $V_g$.
	Our next lemma states that $L$ is a matching. Moreover, $V_g \cap A_i$ is an independent set for every $i \in [5]$.

	\begin{lemma}\thlabel{lemma:noedgesinsideAi}
		For every $\eps > 0$ there exists $n_\eps \in \mathbb{N}$ such that for every $n > n_\eps$
		we have
		 $G[V_g \cap A_i] = \emptyset$ for every $i\in[5]$ and 
			 $L$ is a matching.
			 \end{lemma}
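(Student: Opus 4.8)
The plan is to assume the conclusion fails and then extract either a multicolored triangle or an impossible color count at a single vertex. The first thing I would do is fix a small constant $\delta>0$ and invoke \thref{lemma:smallcrossdegree} and \thref{lemma:sizeofAis} with parameter $\delta$: thus every $v\in A_j$ has $d(v)\le 2n/5+\delta n$, has at most $\delta n$ neighbours in each of the three parts $A_t$ with $t\notin\{j-1,j+1\}$, has $d_t(v)\ge d(v)-n/5-\delta n$ for $t\in\{j-1,j+1\}$, and $|A_j|\le n/5+\delta n$. I take $\delta$ small and $n$ large at the very end.

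The technical heart of the argument is a single observation about an arbitrary edge $uv\in E(G)$ with $c(uv)=j_0$, where I write $F_{j_0}=u\,v\,x\,y\,z$. For a common neighbour $w$ of $u$ and $v$ the triangle $uvw$ is not multicolored, so two of its edges share a color; since $c(uw)=j_0$ forces $w=z$ and $c(vw)=j_0$ forces $w=x$, all but at most two common neighbours $w$ satisfy $c(uw)=c(vw)=:\phi(w)$, and then $\phi(w)\neq j_0$ (otherwise $w$ would equal both $z$ and $x$). The key point — which I expect to be the crux — is that $\phi$ is \emph{injective} where it is defined: if $\phi(w)=\phi(w')$ for distinct common neighbours $w,w'$, then $uw,uw'$ are the two color-$\phi$ edges at $u$ and $vw,vw'$ the two color-$\phi$ edges at $v$, so $F_{\phi(w)}$ would contain the $4$-cycle $u\,w\,v\,w'$ — impossible, since $F_{\phi(w)}\cong C_5$. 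Hence if $u$ and $v$ have $m$ common neighbours, at least $m-2$ distinct colors are incident to both of them; since any vertex meets only $d(\cdot)/2\le n/5+\delta n$ colors, the pair $(u,v)$ can have at most $n/5+\delta n+2$ common neighbours.

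For the first assertion I would suppose $uv\in E(G)$ with $u,v\in V_g\cap A_i$. As $u$ and $v$ are good, each has at most $3\delta n$ neighbours outside $A_{i-1}\cup A_{i+1}$ and hence at least $7n/20-3\delta n$ neighbours inside it, a set of size at most $2n/5+2\delta n$; so $u$ and $v$ have at least $3n/10-O(\delta n)$ common neighbours. This contradicts the bound $n/5+\delta n+2$ from the previous paragraph once $\delta$ is small and $n$ is large, and so $G[V_g\cap A_i]=\emptyset$.

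For the matching assertion I would suppose some $u\in V_g\cap A_i$ lies on two distinct edges $uv,uv'\in L$, so $v,v'\in V_g$ with $v\neq v'$; by the first assertion $v,v'\notin A_i$, hence each lies in a diagonal part $A_{i\pm2}$, and then $A_i$ and the part of $v$ have a unique common neighbouring part $A_\ell$ with $\ell\in\{i-1,i+1\}$. Writing $D:=d(u)$, a short count shows $u$ has at least $D-n/5-O(\delta n)$ neighbours in $A_\ell$ and $v$ at least $3n/20-O(\delta n)$, so the pair $(u,v)$ has at least $D-n/4-O(\delta n)$ common neighbours in $A_\ell$; by the heart observation the set $\Phi_v$ of colors at $u$ arising from those common neighbours has $|\Phi_v|\ge D-n/4-O(\delta n)$, and likewise for $\Phi_{v'}$. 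Both sit inside the $D/2$ colors at $u$, so using $D\ge 7n/20$ one gets $|\Phi_v\cap\Phi_{v'}|\ge n/40-O(\delta n)>1$. Finally, for any $\phi\in\Phi_v\cap\Phi_{v'}$ the cycle $F_\phi$ contains a common neighbour of $u,v$ and a common neighbour of $u,v'$, each joined to $u$ by a color-$\phi$ edge; these two vertices cannot coincide (a vertex has only two color-$\phi$ edges, yet $u,v,v'$ are distinct), so they are exactly the two $F_\phi$-neighbours of $u$, and tracing round $F_\phi$ forces $vv'\in E(F_\phi)$, i.e. $c(vv')=\phi$. Since the edge $vv'$ carries a single color this gives $|\Phi_v\cap\Phi_{v'}|\le1$, a contradiction, so $L$ is a matching. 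As noted, the main obstacle is the injectivity step (a $C_5$ contains no $C_4$): without it one only extracts $\sim m/2$ colors from $m$ common neighbours, which is too weak for the first assertion and fails narrowly for the second; the only other delicate point is that the diagonal-case common-neighbourhood estimate must be kept in terms of $D=d(u)$ so that it remains strong when $u$ has degree close to $2n/5$.
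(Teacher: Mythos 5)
Your proof is correct, and while the first assertion follows essentially the paper's route (count the common neighbourhood of $u,v\in V_g\cap A_i$, use the ``no multicoloured triangle / $C_5\not\supseteq C_4$'' observation to show that distinct common neighbours yield distinct colours incident to both $u$ and $v$, then exceed the colour budget $d(u)/2$), your treatment of the matching assertion is genuinely different and, in my view, cleaner. The paper splits into two cases depending on whether the two unstructured edges $uv,uv'$ go to the same diagonal part or to opposite diagonal parts: in the first case it produces a single vertex in $A_2$ incident to three edges of one colour, and in the second it runs a two-round bootstrap, first forcing $d(v)\gtrsim 2n/5$, then re-using this to improve $|D_5^*|,|D_2^*|$ and push $d(v)$ past $3n/5$. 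You instead keep $D=d(u)$ as a free parameter, bound $|\Phi_v|,|\Phi_{v'}|\ge D-n/4-O(\delta n)$ inside the $D/2$ colours at $u$ by inclusion--exclusion, and then observe that each $\phi\in\Phi_v\cap\Phi_{v'}$ forces $F_\phi=u\,w\,v\,v'\,w'$ and hence $c(vv')=\phi$, so $|\Phi_v\cap\Phi_{v'}|\le 1$; the inequality $3D/2-n/2\le 1+O(\delta n)$ then contradicts $D\ge 7n/20$ directly with no bootstrap, and the argument is insensitive to whether $v,v'$ lie in the same diagonal part or not. What the paper's approach buys is slightly more explicit geometric control (it records which part each auxiliary vertex lies in); what yours buys is a single uniform argument and the avoidance of the iterated degree-improvement. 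Both are sound; yours is arguably the shorter write-up once the ``injectivity of $\phi$'' observation is isolated as a standalone tool, which is exactly how you structured it.
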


\begin{proof}
		We may assume that $n$ is large enough so that the assertions of \thref{lemma:sizeofAis,lemma:smallcrossdegree} apply. In particular, we assume for all $i\in [5]$, $j\not\in\{i-1,i+1\}$ and $v\in A_i$ that 
		\begin{align}\label{eq:Lem3Pt4andLem3.5}
		|A_i|\le n/5+\eps n,\quad  d_j(v) \le \eps n \quad  \text{and} \quad  d(v)\le 2n/5 +\eps n.
		\end{align}

		Without loss of generality we may assume that $i=1$.
		Let $u, v \in A_1
		 \cap V_g$ and suppose for contradiction that $uv \in E(G)$.
		For $j \in \{2,5\}$, let $A_{j}^{*}$ be the common neighborhood of $u$ and $v$ in $A_{j}$ which avoids the vertices of the $C_5$ of color $c(uv)$.
		Observe that for each $a \in A_{2}^{*}\cup A_{5}^{*}$ we have $c(au)=c(av)$, otherwise $auv$ is  a multicolored triangle.
		Moreover, if $a, a'\in A_{2}^{*}\cup A_{5}^{*}$ are distinct vertices, then 
		$c(au)=c(av)\neq c(a'u) = c(a'v)$,
		otherwise $aua'v$ is   a monochromatic copy of $C_4$, which cannot exist in $G$.
		From this, it follows that there are $|A_{2}^{*}|+|A_{5}^{*}|$ different colors incident to both $u$ and $v$, and hence
		\begin{align}\label{eq:firstboost}
			\min\{d(u),d(v)\}\ge 2\big ( |A_{2}^{*}|+|A_{5}^{*}| \big),
		\end{align}
		where the factor of 2 accounts for the fact that every color contributes twice to a degree of a vertex.
		By the inclusion--exclusion principle (applied separately to each of $A_2^*$ and $A_5^*$), we have
		\begin{align}\label{eq:boundA2A5star}
			|A_{2}^{*}|+|A_{5}^{*}| &\ge d_2(u)+d_5(u)+d_2(v)+d_5(v)-|A_2|-|A_5|-3,
		\end{align}
		where the term $-3$ accounts for neighbors in the copy of $C_5$ of color $c(uv)$. Now, $d_2(v)+d_5(v)\ge d(v) - 3\eps n$, $d_2(u)+d_5(u)\ge d(u) - 3\eps n$, and 
		$|A_2|+|A_5|\le 2n/5+2\eps n,$ all by~\eqref{eq:Lem3Pt4andLem3.5}. 
		Absorbing the constant term and using that $u, v \in V_g$, we thus obtain  
	\begin{align*}
		|A_{2}^{*}|+|A_{5}^{*}| \ge 2\cdot \dfrac{7n}{20}-\dfrac{2n}{5}-9\eps n = \dfrac{3n}{10} -9\eps n.
	\end{align*}
		This, together with~\eqref{eq:firstboost}, implies that $d(u) \ge 3n/5-18\eps n$, which contradicts $d(u) \le 2n/5+\eps n$ in~\eqref{eq:Lem3Pt4andLem3.5}. 
		
		To prove that $L$ is a matching, we split the proof into two cases.
		For the first case, suppose for contradiction that there exist good vertices $v \in A_1$ and $a,b\in A_3$ such that $va, vb \in E(G)$. 
		The common neighborhood of $a,b$ and $v$ inside $A_2$ has size at least
		\(d_2(v)+d_2(a)+d_2(b)-2|A_2|.\)
		Let $B_2^*$ be the common neighborhood of $a,b$ and $v$ inside 
		$A_2$ excluding the vertices of the copies of $C_5$ with colors $c(av)$ and $c(bv)$, so that 
		\[|B_2^*|\ge d_2(v)+d_2(a)+d_2(b)-2|A_2|-6.\]
		Now, $v$ has at least $7n/20-3\eps n - |A_5|$ neighbors in $A_2$, by~\eqref{eq:Lem3Pt4andLem3.5} and since $v$ is a good vertex. Similarly, each of 
		$a$ and $b$ have at least $7n/20-3\eps n - |A_4|$ neighbors in $A_2$. 
		Since $|A_i| \le n/5 + \eps n$ for all $i$, by~\eqref{eq:Lem3Pt4andLem3.5}, we obtain that 
		\[|B_2^*| \ge 3\left(\dfrac{7n}{20}-\dfrac{n}{5}-4\eps n\right)-\dfrac{2n}{5}-2\eps n-6\ge \dfrac{n}{40}.\]	
		In particular, $B_2^*$ is non-empty. Let $u\in B_2^*$. 
		 The edges $uv$, $ua$ and $ub$ have colors different from $c(av)$ and $c(bv)$, by definition of $B_2^*$. This implies that $c(uv)=c(ua)=c(ub)$, as otherwise there was a multicolored triangle in $G$. But this is a contradiction since each color class is a copy of a $C_5$. 

		For the second case, suppose for contradiction that there exist good vertices $v \in A_1$, $a\in A_{4}$ and $b\in A_{3}$ such that $va, vb \in E(G)$.
		Let $D_{5}^*$ be the common neighborhood of $v$ and $a$ in $A_{5}$, which  avoids the vertices of the $C_5$ of color $c(av)$; and let $D_2^{*}$ be the common neighborhood of $v$ and $b$ in $A_{2}$ which avoids the vertices of the $C_5$ with color $c(bv)$. Similarly to the first case, we first show that $D_{5}^*$ and $D_{2}^*$ are non-empty. Again, we have 
	\begin{align}
	|D_{5}^*| &\ge d_5(v)+d_5(a)-|A_5|-6 \nonumber \\
	&\ge \Big(d(v) - |A_{2}| - 3\eps n \Big) +\Big(d(a) - |A_{3}| - 3\eps n \Big) -|A_5|-6 \label{intermediate-bound}\\ 
	&\ge 2\cdot \frac{7n}{20} - \frac{3n}{5} -9\eps n -6\nonumber\\
	&\ge \frac{n}{10} - 10\eps n,\label{eq:D5-bound}
	\end{align}		
	where we used~\eqref{eq:Lem3Pt4andLem3.5} in the second inequality, and 
	the upper bound on $|A_i|$ from~\eqref{eq:Lem3Pt4andLem3.5} and the fact that $a$ and $v$ are good vertices in the third inequality. Similarly, $|D_{2}^*| \ge n/10-10\eps n$.	
	
	Now let $u\in D_{5}^*$ and $u' \in D_{2}^*$ be arbitrary vertices. 
	Then we must have 	\(c(vu)=c(au)\) and  \(c(vu')=c(bu')\)
	since $G$ does not have a multicolored triangle. 
	Moreover, $$|\{c(vu): u \in D_{5}^*\} \cap \{c(vu'): u' \in D_{2}^*\}|\le 1,$$
	as otherwise $G$ contained two monochromatic paths of length four, both with 	endpoints $a$ and $b$. But this cannot happen since each color class is a copy of a $C_5$. 
		It follows that $v$ is incident to at least $|D_{5}^*|+|D_{2}^*|-1$ distinct colors, and hence
		\begin{align}\label{eq:degreeofv}
			d(v)\ge 2\big(|D_{5}^*|+|D_{2}^*|\big)-2\ge 2n/5 -21\eps n, 
		\end{align}
	using~\eqref{eq:D5-bound} and the corresponding bound on $|D_{2}^*|$. 
	Using this new bound on the degree of $v$, it follows from~\eqref{eq:Lem3Pt4andLem3.5} that
		\[d_5(v) \ge d(v) - |A_2| - 3\eps n 
		\ge d(v)-n/5-4\eps n \ge n/5 -25 \eps n.\]
	Feeding this new lower bound on $d_5(v)$ into~\eqref{intermediate-bound}, leaving all other bounds in~\eqref{intermediate-bound}--\eqref{eq:D5-bound} unchanged, we obtain that 
		\begin{align*}
			|D_5^*|&\ge \left(\dfrac{n}{5} - 25 \eps n\right) + \left(\dfrac{7n}{20} -  |A_3| -
			3\eps n\right) - |A_5| -6
			\ge \dfrac{3n}{20} - 30\eps n.
		\end{align*}
	Analogously, one obtains that $|D_2^*| \ge 3n/20 - 30\eps n.$ 
	With~\eqref{eq:degreeofv} this implies now that $d(v)$ is at least $3n/5-121\eps n$,
	which contradicts the upper bound on $d(v)$ in~\eqref{eq:Lem3Pt4andLem3.5}. 
		\end{proof}

	\section{Proof of Theorem~\ref{thm:main-structure}}
	
	Throughout this section, let $\eps\in (0,2^{-200})$ be a fixed small constant and $n$ be sufficiently large (and in particular, large enough such that the conclusions of \thref{lemma:sizeofAis,lemma:smallcrossdegree} hold for $\eps$).
	Recall that we set $A_1,\ldots, A_5$ to be the disjoint sets given by the subgraph of a blow-up of $C_5$ in~\thref{cor:roughpentagon-like} such that 
	the intersection of $G$ with the blow-up $C_5(A_1,\ldots,A_5)$ 
	has the maximum number of edges among all subgraphs of $G$ that are also 
	subgraphs of a blow-up of $C_5$. 
	
	We start by bounding the size of the set of vertices whose degree is far from $2n/5$.
	For $\gamma \in (\eps^{1/4},2^{-12})$, define
	\[V_{\gamma}=\big\{v\in V(G): |d(v)-2n/5| \le \gamma n\big\}.\]
	
	Observe that $V_{\gamma} \se V_g$.
	In particular, Lemma~\ref{lemma:noedgesinsideAi} holds with $V_g$ replaced by $V_{\gamma}$. 
	Our first claim bounds the size of $V_{\gamma}^c:=V(G)\setminus V_{\gamma}$.
	\begin{claim}\thlabel{claim:smallvertices}
		$|V_{\gamma}^c|\le 8/\gamma^2$.
	\end{claim}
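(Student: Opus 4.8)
The plan is to bound $|V_\gamma^c|$ via the second-moment estimate \eqref{eq:uppermainlinearapprox-2} from \thref{thm:mainlinearapprox}. Recall that for $v \in [n]$ we have $s_v = d(v) - 2e(G)/n$, and that \eqref{eq:uppermainlinearapprox-2} gives $\sum_v s_v^2 \le (6 + 8r/5 + o(1))qn \le C q n \le C' n^2$ for an absolute constant. The key point is that the average degree $2e(G)/n$ is close to $2n/5$: since $e(G) \ge 5 t(n) \ge n^2/5 - 2n$ by \eqref{eq:lower-bound-extremal}, and since every vertex satisfies $d(v) \le 2n/5 + \delta n$ by \thref{lemma:smallcrossdegree} (so $e(G) \le n^2/5 + \delta n^2/2$), we get $2e(G)/n = 2n/5 + O(\delta n)$. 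Hence for a vertex $v \in V_\gamma^c$, i.e.\ with $|d(v) - 2n/5| > \gamma n$, we have $|s_v| \ge \gamma n - O(\delta n) \ge \gamma n / 2$ (using $\delta \ll \gamma$, which holds since $\gamma > \eps^{1/4}$ and $\eps$ is chosen tiny). Therefore
\[
C' n^2 \;\ge\; \sum_{v \in [n]} s_v^2 \;\ge\; \sum_{v \in V_\gamma^c} s_v^2 \;\ge\; |V_\gamma^c| \cdot \frac{\gamma^2 n^2}{4},
\]
which yields $|V_\gamma^c| \le 4C'/\gamma^2$, and a careful tracking of the constants in \eqref{eq:uppermainlinearapprox-2} (which for large $n$ has leading constant at most $6 + 8 \cdot 4/5 = 13.2 < 32$, and in fact the bound can be sharpened) gives the stated $8/\gamma^2$.

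The main obstacle, and the only place that requires care, is pinning down the constant: one needs $\sum_v s_v^2 \le 2 \gamma^2 n^2 \cdot (8/\gamma^2) / \text{(something)}$... more precisely, to land exactly on $8/\gamma^2$ rather than a larger absolute-constant multiple of $1/\gamma^2$, I would argue as follows. Split $V_\gamma^c$ into vertices of large degree ($d(v) > 2n/5 + \gamma n$) and small degree ($d(v) < 2n/5 - \gamma n$). By \thref{lemma:smallcrossdegree} the large-degree set is empty for $n$ large (since $d(v) \le 2n/5 + \delta n < 2n/5 + \gamma n$), so $V_\gamma^c$ consists only of small-degree vertices. For these, $2e(G)/n - d(v) > \gamma n - O(\delta n)$, but one must be slightly more delicate since a priori $2e(G)/n$ could be as small as $2n/5 - 4$; combining $2e(G)/n \ge 2n/5 - 4$ with the small-degree condition still gives $s_v^2 \le$ (nothing useful directly)—rather, we want a lower bound $|s_v| \ge \gamma n - 4 \ge \gamma n / 2$, which is fine. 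Then $\sum_v s_v^2 \ge |V_\gamma^c| (\gamma n/2)^2$, and feeding in \eqref{eq:uppermainlinearapprox-2} with its explicit constant bounded by, say, $2n^2$ for $n$ large (using $q \le n/5$ and $6 + 8r/5 \le 13.2$, so the bound is $\le 13.2 \cdot (n/5) \cdot n + o(n^2) \le 2 n^2 \cdot \tfrac{n^2}{...}$—here one checks $2.64 n^2 + o(n^2) \le 2.7 n^2$), we obtain $|V_\gamma^c| \le 4 \cdot 2.7 / \gamma^2 = 10.8/\gamma^2$; to reach $8/\gamma^2$ one uses instead the sharper observation that $r \le 4$ forces the precise constant and that $o(1)$ terms are negligible, together with $q = n/5 - O(1)$, giving leading term $13.2 \cdot q n \le 13.2 \cdot (n^2/5) = 2.64 n^2$ and then $|V_\gamma^c| \le 2.64 n^2 / (\gamma^2 n^2 / 4) = 10.56/\gamma^2$—so in fact the claim as literally stated may require the slightly better input that, for $n$ large, one can use $s_v^2 \ge (\gamma n - 4)^2 \ge \gamma^2 n^2 (1 - o(1))$, pushing the bound down, or simply absorbing the discrepancy into the statement's implicit tolerance. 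I would therefore write the proof to conclude $|V_\gamma^c| \le 8/\gamma^2$ by using the clean inequality $\sum_v s_v^2 \le (32/5) q n \le (32/5)(n/5) n < (8/5) n^2 \cdot (4/5)$—no: the honest route is to note \eqref{eq:uppermainlinearapprox-2} with $6 + 8r/5 \le 6 + 32/5 = 62/5$ and $qn \le n^2/5$ gives $\sum s_v^2 \le (62/25 + o(1)) n^2 < (8/3) n^2$ for $n$ large, whence $|V_\gamma^c| \le (8/3) n^2 / (\gamma^2 n^2/4) \cdot$ — this still overshoots, so I conclude that the intended argument uses $|s_v| \ge \gamma n$ (not $\gamma n/2$), justified because $\gamma$ is bounded below by $\eps^{1/4}$ while the gap $|2e(G)/n - 2n/5|$ is $O(\eps^{1/2} n) = o(\gamma n)$, giving $\sum s_v^2 \ge |V_\gamma^c| \gamma^2 n^2$ and hence $|V_\gamma^c| \le (62/25 + o(1))/\gamma^2 \le 8/\gamma^2$ comfortably.

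In summary: first establish $2e(G)/n = 2n/5 + o(\gamma n)$ from the upper bound on degrees (\thref{lemma:smallcrossdegree}) and the lower bound $e(G) \ge 5t(n)$ (\eqref{eq:lower-bound-extremal}); then observe $V_\gamma^c$ contains only vertices of degree below $2n/5 - \gamma n$, each contributing $s_v^2 \ge (1-o(1))\gamma^2 n^2$; finally combine with $\sum_v s_v^2 \le (6 + 8r/5 + o(1))qn \le 8\gamma^2 n^2 \cdot |V_\gamma^c|^{-1}$-style rearrangement, i.e.\ $|V_\gamma^c| \le \sum_v s_v^2 / ((1-o(1))\gamma^2 n^2) \le 8/\gamma^2$. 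The only genuinely delicate point is bookkeeping the numerical constants so they fit under $8$; everything else is immediate from the cited results.
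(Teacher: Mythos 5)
Your proposal is correct and takes the same route as the paper: the paper also uses the second-moment bound $\sum_v s_v^2$ from \eqref{eq:uppermainlinearapprox-2} together with $e(G) = n^2/5 + O(n)$ (which follows from \eqref{eq:lower-bound-extremal} and \eqref{eq:uppermainlinearapprox}) to conclude that $|s_v| \ge \gamma n/2$ for $v \in V_\gamma^c$, and then rearranges. Your extended worry about the numerical constant is well-founded --- the paper's choice $|s_v|\ge \gamma n/2$ combined with $\sum_v s_v^2 \le (6+8r/5+o(1))qn \le (62/25+o(1))n^2$ in fact only gives $|V_\gamma^c| \lesssim 10/\gamma^2$, and the paper's parenthetical $(6+4r/5)q\le 2n$ appears to be a slip for $8r/5$ --- but as you eventually note, since the slack $|2e(G)/n - 2n/5|$ is actually $O(1)$ (not merely $O(\delta n)$), one has $|s_v|\ge (1-o(1))\gamma n$, which yields $|V_\gamma^c|\le (62/25+o(1))/\gamma^2 \le 8/\gamma^2$ with room to spare; either way the bound $8/\gamma^2$ is used only up to its order of magnitude in the sequel, so the point is cosmetic.
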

	
	\begin{proof}
		Let $n = 5q+r$, where $q \in \mathbb{N}$ and $r \in \{0,\ldots,4\}$. It follows from~\eqref{eq:lower-bound-extremal} and~\eqref{eq:uppermainlinearapprox} that
		\begin{align*}
			\dfrac{n^2}{5}-2n\le e(G)\le \dfrac{n^2}{5} + 12n.
		\end{align*}
		Thus, the average degree of $G$ is $2n/5+O(1)$.
		Recall that for each vertex $v \in V(G)$, we defined $s_v = d(v)-2e(G)/n$.
		If $v \in V_{\gamma}^c$, then
		\begin{align*}
			|s_v| &= |d(v)-2e(G)/n|\ge \gamma n/2.
		\end{align*}
		Therefore, by~\eqref{eq:uppermainlinearapprox-2} (and observing $(6 + 4r/5)q \le 2n$) 
		we have
		\[|V_{\gamma}^c|\cdot \dfrac{\gamma^2 n^2}{4} \le 2n^2,\]
		which implies \[|V_{\gamma}^c|\le \dfrac{8}{\gamma^2}. \qedhere\]
	\end{proof}

	We say that a  $5$-cycle $C$ in $G$ is \emph{great} if all of its edges have the same color and all of its vertices are in $V_{\gamma}$. 
	For each pair of vertices~$a$ and~$b$, let $g_{ab}$ be the number of great 5-cycles containing~$a$ and~$b$.
	\begin{lemma}\thlabel{boundingtab}
		Let $i \in [5]$ and $ab \in E(G)$, with $a \in A_i \cap V_{\gamma}$ and $b \in A_{i+2} \cap V_{\gamma}$.
		 Then, we have $g_{ab}\ge n/5-4\gamma n$.
	\end{lemma}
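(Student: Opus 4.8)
The plan is to exploit the edge $ab$ together with the large common neighbourhood of $a$ and $b$ in the part $A_{i+1}$ lying ``between'' $A_i$ and $A_{i+2}$. First I would record that, since $a\in A_i\cap V_\gamma$ and $b\in A_{i+2}\cap V_\gamma$ have degree at least $2n/5-\gamma n$, the lower bound on $d_{i\pm 1}$ in \thref{lemma:smallcrossdegree} gives $d_{i+1}(a),d_{i+1}(b)\ge n/5-\gamma n-\eps n$; combining this with $|A_{i+1}|\le n/5+\eps n$ from \thref{lemma:sizeofAis} and inclusion--exclusion inside $A_{i+1}$ yields $|N_{i+1}(a)\cap N_{i+1}(b)|\ge n/5-2\gamma n-3\eps n$. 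Deleting the at most $8/\gamma^2$ vertices of $V_\gamma^c$ (\thref{claim:smallvertices}) leaves a set $X_0\subseteq N_{i+1}(a)\cap N_{i+1}(b)\cap V_\gamma$ with $|X_0|\ge n/5-2\gamma n-3\eps n-8/\gamma^2$.

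The crux is to turn most vertices of $X_0$ into monochromatic pentagons through $a$ and $b$. For $x\in X_0$ the vertices $a,x,b$ span a triangle --- this is the one place where $ab\in E(G)$ is used --- which cannot be multicoloured, so two of its edges share a colour. If $c(ab)\in\{c(ax),c(xb)\}$ then $x$ is one of the (at most two) neighbours of $a$ or of $b$ on the colour-$c(ab)$ copy of $C_5$, so this occurs for at most two $x\in X_0$. For every other $x\in X_0$ we have $c(ax)=c(xb)=:j_x$, and then $a$ and $b$ are precisely the two neighbours of $x$ on the monochromatic $C_5$ of colour $j_x$, which is therefore a pentagon $a\,x\,b\,y_x\,z_x$ through $a$ and $b$ with $a,b,x\in V_\gamma$; moreover $x$ is the unique common neighbour of $a$ and $b$ on that $C_5$, so distinct $x$ give distinct pentagons. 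This yields a set $X\subseteq X_0$, $|X|\ge|X_0|-2$, of distinct monochromatic pentagons through $a,b$ whose only vertices possibly outside $V_\gamma$ are $y_x$ and $z_x$.

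Finally I would show that almost all of these pentagons are great. Fix $w\in V_\gamma^c$: if $y_x=w$ then $bw$ is an edge of the colour-$j_x$ cycle, so $j_x=c(bw)$ is determined by $w$, and hence so is $x$; the argument with the edge $aw$ handles $z_x=w$. Thus each of the at most $8/\gamma^2$ vertices of $V_\gamma^c$ equals $y_x$ or $z_x$ for at most two values of $x$, so at most $16/\gamma^2$ pentagons from $X$ fail to be great. The rest are great $5$-cycles through $a$ and $b$, and they are pairwise distinct, so
\[
g_{ab}\ \ge\ |X|-\frac{16}{\gamma^2}\ \ge\ \frac{n}{5}-2\gamma n-3\eps n-\frac{24}{\gamma^2}-2\ \ge\ \frac{n}{5}-4\gamma n
\]
for $n$ large, since $\gamma>\eps^{1/4}$ and $\gamma<2^{-12}$ give $3\eps n+24/\gamma^2+2\le 2\gamma n$. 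I expect the only non-routine point to be the colouring argument in the middle paragraph: the edge $ab$ is exactly what forces $c(ax)=c(xb)$ for all but two common neighbours $x$, which is precisely the condition needed to close such an $x$ into a monochromatic pentagon. The remainder is bookkeeping with the degree estimates of \thref{lemma:smallcrossdegree,lemma:sizeofAis} and the bound $|V_\gamma^c|\le 8/\gamma^2$ from \thref{claim:smallvertices}; the only mild subtlety is that the two ``hidden'' vertices $y_x,z_x$ of each pentagon must also be kept in $V_\gamma$, which the last counting step takes care of.
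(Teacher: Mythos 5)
Your proof is correct and follows the same route as the paper: use the degree bounds of \thref{lemma:smallcrossdegree,lemma:sizeofAis} to produce a large common neighbourhood of $a$ and $b$ inside $A_{i+1}$, observe that the triangle $axb$ together with the absence of multicoloured triangles forces $c(ax)=c(xb)$ for all but $O(1)$ such $x$ (so each such $x$ yields a distinct monochromatic pentagon through $a$ and $b$), and finally discard the few pentagons meeting $V_\gamma^c$ using \thref{claim:smallvertices}. Your accounting in the last step (each vertex of $V_\gamma^c$ can sit on at most two of these pentagons, as $y_x$ or as $z_x$) is in fact a touch more careful than what the paper writes, but both versions are comfortably absorbed into the $-4\gamma n$ slack.
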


	\begin{proof}
		Without loss of generality, suppose that $i =1$ and let $a \in A_1 \cap V_{\gamma}$ and $b \in A_3 \cap V_{\gamma}$.
		By Lemma~\ref{lemma:smallcrossdegree}, we have
		\begin{align}\label{eq:boundond2ad2b}
			\min \{ d_2(a), d_2(b)\} &\ge \dfrac{2n}{5}-\gamma n - \dfrac{n}{5}-4\eps n = \dfrac{n}{5}-\gamma n-4\eps n.
		\end{align}
		Thus, the vertices $a$ and $b$ are incident to most  vertices in $A_2$.
		
		Recall that $c:E(G)\to \mathbb{N}$ is the coloring associated to the partition of $E(G)$ into copies of $C_{5}$. 
		Define
		$R_a = \{c(av): av \in E(G), \, v \in A_1 \cup A_3 \cup A_4\}$ 
		and 
		$R_b = \{c(bv): bv \in E(G), \, v \in A_1 \cup A_3 \cup A_5\}$.
		These sets correspond to the set of colors incident to $a$ and $b$, respectively, 
		which appear at an edge which is not contained in 
		$G \cap C_5(A_1,\ldots,A_5)$. 
		By~\thref{lemma:smallcrossdegree}, both $R_a$ and $R_b$ have size at most  $3\eps n$.
		In particular, $|R_a \cup R_b|\le 6\eps n$.
		Let $A_2^*$ be the common neighborhood of $a$ and $b$ inside $A_2$ excluding the vertices incident to some color in $R_a \cup R_b$. 
		Then 
		\begin{align}\label{eq:boundingA2*}
			|A_2^*| &\ge d_2(a)+d_2(b)-|A_2|-5|R_a \cup R_b| \ge  \dfrac{n}{5}-2\gamma n-9\eps n -30\eps n \ge \dfrac{n}{5} - 3\gamma n,
		\end{align}
		 where we use~\eqref{eq:boundond2ad2b}, $|A_2| \le n/5+\eps n$ (c.f.~\thref{lemma:sizeofAis}) and $|R_a \cup R_b|\le 6\eps n$ in the second inequality, and $\gamma>39\eps$ by choice of $\gamma$ in the last inequality. 
		
		As $c(ab)\in R_a \cup R_b$, for every $v \in A_2^{*}$ we have $c(av)\neq c(ab)$ and $c(bv)\neq c(ab)$.
		This implies that $c(av)=c(bv)$ for every $v \in A_2^{*}$.
		In particular, the number of monochromatic 5-cycles containing $a$ and $b$ is at least $n/5-3\gamma n$.
		%
		At most $8/\gamma^2$ of these cycles contain a vertex in $V_{\gamma}^c,$ by \thref{claim:smallvertices}.    
		Therefore, it follows that there are at least $n/5 - 3\gamma n - 8/\gamma^2$ great cycles containing~$a$ and~$b$. 
	\end{proof}

	Recall that a vertex $v$ is good if $d(v)\ge 7n/20$. Recall that an edge $e \in E(G)$ is unstructured if $e \notin \bigcup_i E_G(A_i,A_{i+1})$.
	Let $M$ be the set of unstructured edges.
	By~\thref{lemma:noedgesinsideAi}, we know that 
	$M$ is a matching when restricted to good vertices, and in particular when restricted to~$V_\gamma$.
	With this in mind, one would hope to prove that $|M| \le n/2+o(n)$. 
	It turns out that we can prove a much better bound, which is even close to optimal (as discussed in Section~\ref{sec:approximate}).
	Recall that $\eps\in (0,2^{-200})$ and $\gamma  \in (\eps^{1/4},2^{-12})$.

	
	\begin{lemma}\thlabel{lemma:boundonM}
		Let $q \in \mathbb{N}$ and $r \in \{0,\ldots,4\}$ be such that $n = 5q+r$. Then, we have
		\[|M|\le 2q+2^{6}\gamma q.\]
	\end{lemma}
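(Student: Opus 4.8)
The plan is to bound $|M|$ by a double-counting argument on great $5$-cycles that use at least one unstructured edge. First I would observe that $M$ partitions (up to a few exceptional vertices lying outside $V_\gamma$) according to which two non-consecutive parts the endpoints of an unstructured edge fall into; since $G[V_\gamma\cap A_i]=\varnothing$ by \thref{lemma:noedgesinsideAi}, every unstructured edge with both endpoints in $V_\gamma$ joins some $A_i$ to $A_{i+2}$. So let $M'\subseteq M$ be the set of unstructured edges with both endpoints in $V_\gamma$; by \thref{claim:smallvertices}, $|M\setminus M'|\le$ (number of unstructured edges meeting $V_\gamma^c$), and since $M$ restricted to $V_\gamma$ is a matching, the vertices of $V_\gamma^c$ account for at most $O(1/\gamma^2)\cdot (\max_v d(v))=O(n/\gamma^2)$ — this is too weak, so instead I would note each vertex of $V_\gamma^c$ is incident to at most $\max_v d(v)\le 2n/5+\eps n$ edges but we only need to discard unstructured ones; better, I would simply absorb $M\setminus M'$ into the error term by a cruder route and focus the main count on $M'$, bounding $|M\setminus M'|$ separately using that each bad vertex contributes at most one edge to the matching-part and the rest to a negligible pool. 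Let me instead structure it so that the main estimate is $|M'|\le 2q+O(\gamma q)$ and then $|M\setminus M'|\le 2\cdot|V_\gamma^c|\cdot(\text{something linear})$ is folded in — actually the cleanest fix is: each edge of $M$ has an endpoint in $V_\gamma$ unless both endpoints are in $V_\gamma^c$, and there are at most $\binom{|V_\gamma^c|}{2}=O(1/\gamma^4)$ such edges, while the edges of $M$ with exactly one endpoint in $V_\gamma^c$ number at most $|V_\gamma^c|\cdot\Delta(G)=O(n/\gamma^2)$, which is again linear and not obviously $o(\gamma q)$. So the genuine content must be that these edges are few for a structural reason, not just a degree reason.

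The right approach, I expect, is the following counting. For each edge $ab\in M'$ with $a\in A_i\cap V_\gamma$, $b\in A_{i+2}\cap V_\gamma$, \thref{boundingtab} gives $g_{ab}\ge n/5-4\gamma n$ great $5$-cycles through $a$ and $b$. Each such great $5$-cycle $C$ is monochromatic, so its five edges all have the same color $c$; since $C$ contains the unstructured edge $ab$ joining $A_i$ to $A_{i+2}$, and $C$ has vertices in $V_\gamma$ with the part-indices constrained (the cycle visits parts in some cyclic pattern that, because $ab$ jumps from $A_i$ to $A_{i+2}$, must contain further unstructured edges — in fact a monochromatic $C_5$ through an $A_i$–$A_{i+2}$ edge must use a controlled number of unstructured edges, and one checks it uses exactly two more, or that the color class $C$ is a $C_5$ all of whose other edges are structured, forcing the cyclic pattern $A_i, A_{i+2}, A_{i+1}, \dots$). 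The key point I would extract is that each color class is a single $C_5$, hence contains at most a bounded number of unstructured edges, so if we count pairs $(C, ab)$ where $C$ is a great $5$-cycle and $ab\in M'$ is an unstructured edge of $C$, the count is at most $(\text{number of great }5\text{-cycles})\times O(1)$ from one side, and at least $|M'|\cdot(n/5-4\gamma n)$ from the other side via \thref{boundingtab}. The number of great $5$-cycles is at most the total number of colors, $k=\ex_{C_5}(C_3,n)\le q^2+O(q)$ by \thref{thm:mainlinearapprox}. Combining, $|M'|\cdot(n/5 - O(\gamma n)) \le O(1)\cdot(q^2+O(q))$, and since $n/5 = q+O(1)$ this yields $|M'|\le O(q) + O(\gamma q)\cdot(\text{const})$; tracking constants carefully — the "$O(1)$" unstructured edges per color class should be exactly $2$ when the class meets the matching and the geometry is as forced above — should give precisely $|M'|\le 2q + 2^6\gamma q$ (with the $2^6$ soaking up the $4\gamma$, the $3\gamma$ from \thref{boundingtab}'s proof, and the constants from \thref{claim:smallvertices} and \thref{thm:mainlinearapprox}).

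The main obstacle will be pinning down exactly how many unstructured edges a monochromatic $C_5$ can contain and getting the constant to come out as $2$ rather than something larger: I would argue that a $C_5$ with vertices in the parts and cyclic adjacency pattern determined by one "jump" $A_i\to A_{i+2}$ is forced, since consecutive vertices of the cycle lie in parts differing by $\pm1$ unless the connecting edge is unstructured, and a closed walk of length $5$ in $\mathbb{Z}/5$ with steps in $\{+1,-1\}\cup\{\text{unstructured}\}$ returning to start with at least one unstructured step must have either exactly one or exactly three unstructured steps by a parity/sum argument mod $5$ — ruling out or bounding the three-jump case (possibly by a separate short argument showing three unstructured edges in one color class with endpoints in $V_\gamma$ is impossible, analogous to the case analysis in \thref{lemma:noedgesinsideAi}) leaves the clean bound. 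A secondary obstacle is the treatment of $M\setminus M'$: I would handle it by noting every edge of $M\setminus M'$ has an endpoint in $V_\gamma^c$, and then argue that the subgraph of $M$ at $V_\gamma^c$ is still a matching *on the $V_\gamma$ side* but could be a star on the $V_\gamma^c$ side — however, since $M$ overall restricted to the *good* vertices $V_g\supseteq V_\gamma$ is a matching and a vertex in $V_\gamma^c$ with degree $\ge 7n/20$ is still good, only vertices with $d(v)<7n/20$ can have several unstructured edges, and \thref{thm:mainlinearapprox}'s bound on $\sum_v s_v^2$ (used already in \thref{claim:smallvertices}) limits the total unstructured-edge count at such low-degree vertices to $O(1/\gamma^2)$ edges each contributing $O(n)$ — so I would actually prove the stronger statement that $|M\setminus M'|\le 2^5\gamma q$ directly from the $s_v^2$ bound, and merge it into the final constant.
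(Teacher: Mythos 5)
Your core double-counting — pairs $(e,C)$ with $C$ a great $5$-cycle and $e$ an unstructured edge inside $V(C)$, lower-bounded via \thref{boundingtab} and upper-bounded by (number of colors) $\times$ (unstructured edges per cycle), then invoking $e(G)\le 5\,\ex_{C_5}(C_3,n)\le 5q^2+O(q)$ from \thref{thm:mainlinearapprox} — is exactly the paper's argument, so the plan is sound. However, two of the steps you flag as "obstacles" are genuine gaps in the write-up. First, the per-cycle bound: the matching property of $M_\gamma$ from \thref{lemma:noedgesinsideAi} gives immediately that a $5$-cycle contains at most $\lfloor 5/2\rfloor=2$ such edges, which is all that is needed; your proposed parity argument on steps in $\mathbb{Z}/5\mathbb{Z}$ asserts that the number of unstructured steps must be $1$ or $3$, but this is false — the step sequence $(+2,+1,+2,-1,+1)$ sums to $0\pmod 5$ and realizes two vertex-disjoint unstructured edges. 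The parity detour is unnecessary, but a proof relying on it as stated would be wrong.

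Second, and more seriously, you never close the estimate on $M\setminus M'$. You correctly reject $|V_\gamma^c|\cdot\Delta(G)=O(n/\gamma^2)$ as too weak, but the alternative you sketch ("$O(1/\gamma^2)$ vertices each contributing $O(n)$") has the same order and does not beat $\gamma q$ either. The missing ingredient is that \thref{lemma:smallcrossdegree} already bounds the \emph{unstructured} degree, not the full degree, of \emph{every} vertex: for $v\in A_i$, only the three parts $A_j$ with $j\notin\{i-1,i+1\}$ can host unstructured neighbors and $d_j(v)\le\eps n$ for each such $j$, so every vertex is incident to at most $3\eps n$ unstructured edges. Together with $|V_\gamma^c|\le 8/\gamma^2$ from \thref{claim:smallvertices} and $\eps<\gamma^4$, this gives $|M\setminus M_\gamma|\le 24\eps n/\gamma^2\le 24\gamma^2 n\le\gamma q$, which is what allows the final constant to land at $2^6\gamma q$ after combining with the double-count bound $|M_\gamma|\le 2q+50\gamma q+O(1)$.
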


	\begin{proof}
			First note that the number of unstructured edges with at least one of its endpoints in $V_{\gamma}^c$ 
			is at most 
			\begin{align}\label{eq:naught-type1}
				& 8/\gamma^2 \cdot 3\eps n \le 24 \gamma^2 n \le \gamma q,
			\end{align}
			by \thref{lemma:smallcrossdegree,claim:smallvertices}.
			It remains to bound the number of unstructured edges with both endpoints in $V_\gamma$. 
			
			Let $M_{\gamma}\se M$ be the set of such unstructured edges $ab$ with $a,b \in V_\gamma$, and let $P$ be the set of ordered pairs $(e,C)$ such that $C$ is a great 5-cycle and $e$ is an unstructured edge with both endpoints in~$V(C)$.
			Observe that if $(e,C) \in P$, then $e \in M_{\gamma}$. In particular, 
			\(\sum_{ab \in M_{\gamma}} g_{ab}\le |P|\), where we recall that $g_{ab}$ denotes the number of great 5-cycles containing $a$ and $b$.   For $ab\in M_\gamma$ we must have $a\in A_i$ and $b\in A_{i+2}$  for some $i\in [5]$ (or vice versa), by definition of an unstructured edge and since $G[A_i\cap V_\gamma]$ is empty, by \thref{lemma:noedgesinsideAi}.  Using \thref{boundingtab}, we thus obtain 
			\begin{align}\label{eq:P-lower-bound}
				|P| \ge |M_{\gamma}|\cdot \left(\dfrac{n}{5}-4\gamma n\right).  
			\end{align}
			
			Now, let $s_C$ be the number of unstructured edges with both endpoints in $V(C)$, for each great 5-cycle $C$. 
			The set of unstructured edges spanned by $V_{\gamma}$ is a matching, by~\thref{lemma:noedgesinsideAi},  so we must have $s_C \le 2$ for all $C$. It follows that 
			\begin{align*}
				|P| = \sum_{C \text{ great}} s_C \le \dfrac{2e(G)}{5}.
			\end{align*}
			%
			Combining this bound with~\eqref{eq:P-lower-bound}, we thus obtain  
			\begin{align}\label{eq:double-count-M}
				|M_{\gamma}|\cdot \left(\dfrac{n}{5}-4\gamma n\right)
			\le \dfrac{2\, e(G)}{5} \le 2\, \ex_{C_5}(C_3,n) \le 2q^2 +2q \left (6+\dfrac{8r}{5}-3 \mathds{1}_{r \in \{3,4\}}+o(1)\right )
			\end{align}
	by~\thref{thm:mainlinearapprox}, where we recall that $q$ is defined by $n=5q+r$ and $r\in\{0,\ldots,4\}$. 
		Now, 
		\begin{align}
			\dfrac{1}{n/5-4\gamma n} \le \dfrac{1}{q-4\gamma n} 
			\le \dfrac{1}{q} \left( 1 + 24 \gamma + 2\cdot (24)^2\gamma^2\right)  \le \dfrac{1}{q} ( 1 + 25 \gamma ),
		\end{align}
		where we use $n \le 6q$, $(1-x)^{-1}\le 1+x+2x^2$ for every $x \in (0,1/2)$, and that $\gamma$ is small enough. 
		Combining this last estimate with~\eqref{eq:double-count-M}, we obtain 
		\[|M_{\gamma}| \le 2q+50\gamma q +O(1).\]
		This, together with the bound in~\eqref{eq:naught-type1}, proves our lemma. 
	\end{proof}

From the constructions given in Section~\ref{sec:lower-bounds}, note that we can have $2n/5$ unstructured edges, and hence our bound of $2n/5+o(n)$ on the number of unstructured edges given by Lemma~\ref{lemma:boundonM} is tight.
		Our next lemma improves the bounds on the size of each $A_i$ from an additive linear error term as in \thref{lemma:sizeofAis} to an additive error of constant size. 

	\begin{lemma}\label{lemma:equidistributed}
		Let $q \in \mathbb{N}$ and $r \in \{0,\ldots,4\}$ be such that $n = 5q+r$.
		For every $j \in [5]$, we have 
		\[ q-15 < |A_j| \le q+64.\]
	\end{lemma}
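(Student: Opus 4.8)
The plan is to bootstrap from the weak bound $|A_j| = n/5 \pm \eps n$ of \thref{lemma:sizeofAis} and the control on the matching of unstructured edges from \thref{lemma:boundonM}. The key idea is an edge-count identity: every monochromatic $C_5$ either lies entirely in $\bigcup_i G(A_i,A_{i+1})$ — in which case it uses exactly one edge between each consecutive pair $(A_i,A_{i+1})$ — or it uses an unstructured edge. Since there are $\ex_{C_5}(C_3,n) \ge t(n) = q^2 + q\mathds{1}_{r\in\{3,4\}}$ colors in total and at most $|M| \le 2q + 2^6\gamma q$ of them are spoiled by an unstructured edge (by \thref{lemma:boundonM}, noting each unstructured edge lies in exactly one color class), at least $q^2 - 2q - 2^6\gamma q$ colors contribute one edge to each $G(A_i,A_{i+1})$. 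Hence $e_G(A_i,A_{i+1}) \ge q^2 - 2q - 2^6\gamma q$ for every $i \in [5]$, which combined with $e_G(A_i,A_{i+1}) \le |A_i||A_{i+1}|$ gives $|A_i||A_{i+1}| \ge q^2 - O(\gamma q)$ for all $i$.

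From here I would run the same balancing argument as in the proof of \thref{lemma:sizeofAis}, but now feeding in the stronger lower bound $|A_i||A_{i+1}| \ge q^2 - O(\gamma q)$ in place of $n^2/25 - 5\delta n^2$. Suppose $|A_1| \le q - c$ for some $c > 0$ (we may assume $q-c>0$ since $|A_1|\ge n/5-\eps n$). Then $|A_2|, |A_5| \ge (q^2 - O(\gamma q))/(q-c) \ge q + c - O(\gamma)$, so $|A_3| + |A_4| \le n - |A_1| - |A_2| - |A_5| \le (5q+r) - (q-c) - 2(q+c-O(\gamma)) = 2q + r - c + O(\gamma)$. Then $|A_3||A_4| \le \big((2q+r-c+O(\gamma))/2\big)^2$, and for $c$ larger than an absolute constant (roughly $c > r + O(\gamma) + 1$, so $c = 15$ suffices comfortably once $\gamma$ is small) this drops below $q^2 - O(\gamma q)$, contradicting the displayed lower bound. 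This yields $|A_j| > q - 15$ for every $j$. The upper bound then follows immediately: $|A_j| = n - \sum_{i \ne j}|A_i| \le (5q+r) - 4(q-15) = q + r + 60 \le q + 64$.

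The only slightly delicate points are (i) making sure the count of spoiled colors is right — an unstructured edge belongs to exactly one $F_i$, so the number of colors whose $C_5$ meets the unstructured set $M$ is at most $|M| \le 2q + 2^6\gamma q$, and every other color's $C_5$, being a bipartite-free $5$-cycle with all edges among consecutive parts, must wind once around the blow-up cycle and so deposit exactly one edge in each $G(A_i,A_{i+1})$; and (ii) checking that the arithmetic in the balancing step genuinely closes with a constant, i.e. that the $O(\gamma q)$ and $O(\gamma)$ error terms (with $\gamma \in (\eps^{1/4}, 2^{-12})$) are dominated once we take $c = 15$ and $n$ large. I expect step (i), pinning down that almost every color class is a ``winding'' $C_5$ contributing exactly one edge per consecutive pair, to be the main conceptual point; the rest is the same convexity/balancing computation already used for \thref{lemma:sizeofAis}, just with sharper inputs.
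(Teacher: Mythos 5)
Your proposal is correct and follows essentially the same route as the paper: bound the number of color classes whose $5$-cycle contains an unstructured edge by $|M|\le 2q+2^{6}\gamma q$, deduce $e_G(A_i,A_{i+1})\ge q^2-O(q)$ from the winding argument for the remaining cycles, and then run the same convexity/balancing computation as in \thref{lemma:sizeofAis}. (The paper additionally discards the at most $8/\gamma^2$ cycles meeting $V_\gamma^c$ so as to work only with great cycles, but as you note this is not needed for the winding argument; and your rough threshold ``$c>r+O(\gamma)+1$'' undercounts the constant slack by a few units, though your final choice $c=15$ is correct.)
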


	\begin{proof}
		The proof is similar to the proof of Lemma~\ref{lemma:sizeofAis}.
		Without loss of generality, suppose that $A_1$ is the part of smallest size.
		Suppose for contradiction that $|A_1| = q - i $, for some $i \ge 15$. 
		
		Let $C_1,\ldots, C_k$ be the colored 5-cycles given by the edge-partition of $G$. Note that $k\ge q^2$ since we assume that $G$ is extremal, by~\eqref{eq:lower-bound-extremal}. 
		At most $8/\gamma^2$ of these cycles contain a vertex in $V_\gamma^c$, by \thref{claim:smallvertices}, and at most $2q +2^6\gamma q$ contain an unstructured edge by \thref{lemma:boundonM}. 
		Observe that a $C_j$ with all vertices in $V_\gamma$ that does not contain an unstructured edge is a great cycle $v_1v_2v_3v_4v_5$ such that $v_i \in A_i$ for each $i \in [5]$. Therefore for each $i\in [5]$,
		\begin{equation}\label{eq:edges}
			e(A_i,A_{i+1}) \ge  q^2-2q - 2^6\gamma q - 8/\gamma^2.
		\end{equation} 
		Similarly to the proof of \thref{lemma:sizeofAis},  this implies that
		\begin{align}\label{eq:sizeofA}
			|A_{2}|, |A_{5}| &\ge \dfrac{q^2-2q - 2^6\gamma q - 8/\gamma^2}{q - i} \ge \dfrac{q^2-3q}{q - i} \ge q + i - 3
		\end{align}
		since $\gamma$ is small and $n$ is large. 
		Thus, we have
		\[|A_3|+|A_4| = n- |A_1|-|A_2|-|A_5| \le  2q - i + r +6 \le 2q - i +10.\]
		
		Using this for the upper bound and \eqref{eq:edges} for the lower bound, we obtain
		\begin{align*}
			(q -i/2 + 5)^2 \ge |A_3||A_4| \ge q^2-2q - 2^6\gamma q - 8/\gamma^2, 
		\end{align*} 
		which is a contradiction, as $i \ge 15$.
		As every set $A_1,\ldots, A_5$ has size at least $q-15$, we conclude that $|A_j|\le q + 64$ for any $j \in [5]$.
		This proves the lemma.
	\end{proof}

	Finally, note that Lemmas~\ref{lemma:equidistributed} and~\ref{lemma:boundonM} imply Theorem~\ref{thm:main-structure}.
	
%
	
%
%

\section{Proof of Theorem~\ref{thm:main}}

Let $\eps\in (0,2^{-200})$, $\gamma  \in (\eps^{1/4},2^{-12})$ and set $\delta = 2^8 \gamma$.
We start by bounding the number of triangles in $G$.

\begin{lemma}\thlabel{numberoftriangles}
	The number of triangles in $G$ is at most $2 n^2/25 + 2\delta n^2$.
\end{lemma}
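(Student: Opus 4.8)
The plan is to exploit the fact that $G$ is close to the triangle-free blow-up $C_5(A_1,\ldots,A_5)$: every triangle of $G$ must contain an \emph{unstructured} edge. Indeed, if a triangle $xyz$ had all three of its edges in $\bigcup_i E_G(A_i,A_{i+1})$, then the three parts of $C_5(A_1,\ldots,A_5)$ containing $x,y,z$ would be distinct and pairwise adjacent in $C_5$, which is impossible. Hence, writing $M$ for the set of unstructured edges of $G$ and $t_e$ for the number of triangles of $G$ containing an edge $e$, the number of triangles of $G$ is at most $\sum_{e\in M}t_e$, since every triangle is counted (between once and three times) in this sum.

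Next I would split $M=M_\gamma\cup M'$, where $M_\gamma$ is the set of unstructured edges with both endpoints in $V_\gamma$ and $M'=M\setminus M_\gamma$. By \thref{lemma:smallcrossdegree} (applied with parameter $\eps$) every vertex is incident to at most $3\eps n$ unstructured edges, so by \thref{claim:smallvertices} we have $|M'|\le |V_\gamma^c|\cdot 3\eps n\le (8/\gamma^2)\cdot 3\eps n\le \gamma q$, using $\eps<\gamma^4$ and $n$ large; together with the trivial bound $t_e\le n$ this gives $\sum_{e\in M'}t_e\le \gamma qn\le \gamma n^2/5$, which is negligible. The key estimate is for an edge $e=uv\in M_\gamma$. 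Since $e$ is unstructured and $G[V_\gamma\cap A_j]=\emptyset$ for every $j$ by \thref{lemma:noedgesinsideAi}, the endpoints of $e$ lie in parts at cyclic distance exactly two, say $u\in A_i$ and $v\in A_{i+2}$. If $w$ is a common neighbour of $u$ and $v$ with both $uw$ and $vw$ structured, then $w$ lies in $(A_{i-1}\cup A_{i+1})\cap(A_{i+1}\cup A_{i+3})=A_{i+1}$; every other common neighbour $w$ is joined to $u$ or to $v$ by an unstructured edge, and there are at most $3\eps n$ such edges at each of $u$ and $v$. Hence $t_e\le |A_{i+1}|+6\eps n\le q+64+6\eps n$ by Lemma~\ref{lemma:equidistributed}.

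Finally I would add up the contributions. By \thref{lemma:boundonM} and $M_\gamma\subseteq M$ we have $|M_\gamma|\le 2q+2^6\gamma q$, so, using $q\le n/5$ and $\eps<\gamma^4$,
$$\sum_{e\in M_\gamma}t_e\le \left(2q+2^6\gamma q\right)\left(q+64+6\eps n\right)\le 2q^2+2^6\gamma q^2+O(\eps n^2)+O(n)\le \frac{2n^2}{25}+2^6\gamma n^2+O(\eps n^2)+O(n).$$
Adding the bound $\sum_{e\in M'}t_e\le \gamma n^2/5$ and recalling $\delta=2^8\gamma$, the number of triangles is at most $2n^2/25+\delta n^2+O(n)$, which is below $2n^2/25+2\delta n^2$ once $n$ is large enough. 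The only step that requires real care is the per-edge bound $t_e\le|A_{i+1}|+6\eps n$ for $e\in M_\gamma$: this is where we use both that the endpoints sit in parts at distance two (via \thref{lemma:noedgesinsideAi}) and that each vertex sends only $O(\eps n)$ edges in the three ``wrong'' directions of the blow-up (via \thref{lemma:smallcrossdegree}); everything else is bookkeeping with estimates already established.
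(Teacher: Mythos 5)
Your proof is correct and proceeds by essentially the same strategy as the paper: every triangle must use an unstructured edge (since the blow-up $C_5(A_1,\ldots,A_5)$ is triangle-free), there are at most $2n/5+O(\gamma n)$ such edges by \thref{lemma:boundonM}, and each contributes roughly $n/5$ triangles via the part-size bound. The only organizational difference is that you count triangles per unstructured edge and split the edges by whether both endpoints lie in $V_\gamma$, using the $3\eps n$ unstructured-degree bound from \thref{lemma:smallcrossdegree} to handle the error, whereas the paper splits the triangle set by whether all three vertices lie in $V_\gamma$ and disposes of the bad part directly via \thref{lemma:edgesinsideneighborhood}; both routes give the stated bound.
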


\begin{proof}
	Let $\Delta_{\gamma}$ be the number of triangles where all three vertices belong to $V_{\gamma}$.
	By~\thref{lemma:noedgesinsideAi}, if $a,b,c \in V_{\gamma}$ and they form a triangle, then we must have $a \in A_i$, $b \in A_{i+1}$ and $c \in A_{i+2}$, for some $i \in [5]$.
	Let $M$ be the set of unstructured edges between pairs of good vertices.
	By~\thref{lemma:boundonM} we have $|M|\le 2n/5+\delta n$ and by~\thref{lemma:sizeofAis} we have $\max_i |A_i|\le n/5+\delta n$. As every triangle contains an unstructured edge,
	\[\Delta_{\gamma} \le |M| \cdot \max_i |A_i| \le 2n^2/25+\delta n^2. \]
	
	Let $\Delta_{\gamma}^c$ be the number of triangles containing at least one vertex outside $V_{\gamma}$.
	By~\thref{lemma:edgesinsideneighborhood} and~\thref{claim:smallvertices}, we have 
	\[\Delta_{\gamma}^c \le \sum_{v \in V_{\gamma}^c} \dfrac{3d(v)}{2}\le \dfrac{3n}{2} |V_{\gamma}^c| \le \dfrac{12n}{\gamma^2} \le \delta n^2. \qedhere \]
\end{proof}

Below, we denote by $C_1,C_2,\ldots,C_k$ the set of monochromatic 5-cycles in $G$. 
The next lemma is due to Kovács and Nagy~\cite{kovacs2022multicolor}.
For completeness, we provide their proof here.

\begin{lemma}[Kovács--Nagy]\thlabel{lemma:sumofdeginC}
Let $i\in [k]$ and denote by $\Delta_i^1$ and $\Delta_i^2$  the number of triangles with exactly one  and two edges colored $i$, respectively.
Then,
\[\sum \limits_{v \in C_i} d(v) \le 2n + 2\Delta_i^2 + \Delta_i^1.\]
\end{lemma}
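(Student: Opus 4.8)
The plan is to double count the sum $\sum_{v\in C_i}d(v)=\sum_{u\in V(G)}|N(u)\cap V(C_i)|$, splitting the outer sum according to whether $u$ lies on the $5$-cycle $C_i$ or not. Write $C_i=v_1v_2\cdots v_5$, so every edge $v_jv_{j+1}$ has color $i$; since the color class $i$ is \emph{exactly} this $5$-cycle, no chord $v_jv_{j+2}$ of $G$ is colored $i$, and no edge incident to a vertex outside $V(C_i)$ is colored $i$. I will also use repeatedly that in every color class each vertex is incident to at most two edges, so at most two edges of any single color meet at a given vertex.

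First I would handle $u\in V(C_i)$: if $u=v_j$, then $u$ is adjacent to its two cycle-neighbours $v_{j-1},v_{j+1}$, and every further neighbour of $u$ in $V(C_i)$ is the endpoint of a chord $v_jv_{j\pm2}$ of $G$. Each chord $v_{j-1}v_{j+1}$ lying in $G[V(C_i)]$ produces the triangle $v_{j-1}v_jv_{j+1}$, which has exactly two edges colored $i$ (its third edge, the chord, is colored $\ne i$); distinct chords give distinct such triangles, since the chord is the unique non-$i$-edge of the triangle and determines it. Hence the number of chords of $G$ inside $V(C_i)$ is at most $\Delta_i^2$, and $\sum_{u\in V(C_i)}|N(u)\cap V(C_i)|\le 2\cdot 5+2\Delta_i^2=10+2\Delta_i^2$. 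Next, for $u\notin V(C_i)$, I claim $|N(u)\cap V(C_i)|\le 3$. Indeed, if $u$ is adjacent to two consecutive cycle vertices $v_j,v_{j+1}$, then the triangle $uv_jv_{j+1}$ is not multicolored, and since $v_jv_{j+1}$ has color $i$ while $uv_j,uv_{j+1}$ do not, we must have $c(uv_j)=c(uv_{j+1})$: a monochromatic cherry at $u$. So $u$ cannot be adjacent to three consecutive cycle vertices (that would force three edges of one color at $u$); as every $4$-subset of a $5$-cycle contains three consecutive vertices, this gives $|N(u)\cap V(C_i)|\le 3$. Moreover, when $|N(u)\cap V(C_i)|=3$ the three vertices must form a set $\{v_j,v_{j+1},v_{j+3}\}$ (the only shape of a $3$-subset of a $5$-cycle avoiding three consecutive vertices), so $u$ has a monochromatic cherry $uv_jv_{j+1}$ with common color $\ne i$, yielding a triangle $uv_jv_{j+1}$ with exactly one edge colored $i$. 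Distinct such vertices $u$ give distinct such triangles, since each contains its own $u$ as the unique vertex off $C_i$. Therefore $\sum_{u\notin V(C_i)}|N(u)\cap V(C_i)|\le 2(n-5)+\Delta_i^1$, and adding the two bounds yields $\sum_{v\in C_i}d(v)\le 10+2\Delta_i^2+2(n-5)+\Delta_i^1=2n+2\Delta_i^2+\Delta_i^1$.

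The argument is local and self-contained, so there is no serious obstacle; the only mild subtlety — the part to write carefully — is the bookkeeping in the off-cycle case: pinning down exactly which vertices $u\notin V(C_i)$ can have three neighbours on $C_i$, verifying the two-consecutive-plus-one pattern, and charging each such $u$ to a \emph{private} triangle counted by $\Delta_i^1$ (and, in parallel, charging each chord to a private triangle counted by $\Delta_i^2$), so that no triangle is counted twice and nothing is overcounted.
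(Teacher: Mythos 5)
Your proof is correct and follows essentially the same route as the paper's: both split $\sum_{v\in C_i}d(v)$ into the contribution from $V(C_i)$ (identifying $e(G[V(C_i)])=5+\Delta_i^2$ via the chord $\leftrightarrow$ two-edges-colored-$i$ triangle correspondence) and the contribution from $[n]\setminus V(C_i)$, where the multicolored-triangle-free condition forces a monochromatic cherry on every pair of consecutive cycle vertices, giving at most three edges per outside vertex and charging each third edge to a private $\Delta_i^1$-triangle. Your write-up just makes explicit (via the ``no three consecutive neighbours'' observation and the $\{v_j,v_{j+1},v_{j+3}\}$ shape) what the paper compresses into ``for the same reason.''
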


\begin{proof}
Note that $e(G[V(C_i)])=5+\Delta_i^2$. Observe that each vertex in $[n]\setminus C_i$ sends at most two colors to $C_i$, otherwise a multicolored triangle is created. For the same reason, each vertex in $[n]\setminus C_i$ sends at most three edges to $C_i$, and if exactly three are sent then two of them with the same color that must go to adjacent vertices in $C_i$.  
	Thus, we have
	\[\sum \limits_{v \in C_i} d(v) \le 2e(G[V(C_i)])+\Delta_i^1+2(n-5)  \le 2n+2\Delta_i^2 + \Delta_i^1.\qedhere \]
\end{proof}

\begin{proof}[Proof of Theorem~\ref{thm:main}]
	Following Kovács and Nagy~\cite{kovacs2022multicolor}, we estimate the double sum
	\[S:= \sum_{i=1}^{k} \sum \limits_{v \in C_i} d(v).\]
	Every vertex $v$ is contained in $d(v)/2$ monochromatic cycles,  hence  $d(v)$ is  counted $d(v)/2$ times in the double sum above. Therefore, 
	\begin{align}\label{eq:lowerboundS}
		S = \sum \limits_{v \in [n]} \dfrac{d(v)^2}{2} \ge \dfrac{n}{2} \cdot \left(\dfrac{2e}{n}\right)^2 = \dfrac{2e^2}{n}.
	\end{align}
	On the other hand, by~\thref{lemma:sumofdeginC}, we have
	\begin{align}\label{eq:upperboundS}
		S & \le 2nk + \sum_{i=1}^k\big ( 2\Delta_i^2 + \Delta_i^1 \big )  \le 2nk + \dfrac{6n^2}{25}+6\delta n^2,
	\end{align}
	using that each triangle is  counted three times and~\thref{numberoftriangles}.
	As $e(G)=5k$, from~\eqref{eq:lowerboundS} and~\eqref{eq:upperboundS}, it follows that
	\begin{align*}
		\dfrac{50k^2}{n} \le 2kn + \dfrac{6n^2}{25}+6\delta n^2,
	\end{align*}
 hence
	\begin{align*}
		k \le \dfrac{n^2}{25} + \dfrac{3n^3}{625k}+\dfrac{\delta n^3}{k}.
	\end{align*}
	As $k \ge n^2/25-2n$, it follows that $k \le n^2/25 + (3/25+50\delta)n$. As $\epsilon$ (and hence $\delta$) can be chosen to be arbitrarily small, the theorem follows. 
\end{proof}


%

%

\section*{Acknowledgments}

L. Mattos would like to thank Imolay for presenting the problem to her in the Berlin Combinatorics Research Seminar in Summer 2022.
The authors thank the Oberwolfach Workshop Combinatorics, Probability and Computing 2022 for hosting them.
We note that the bound $\ex_{C_5}(C_3,n) = n^2/25+\Theta(n)$ was independently obtained by us (c.f. Theorem~\ref{thm:mainlinearapprox}) before learning about the analogous result of Kovács and Nagy  (Theorem 1.6 in~\cite{kovacs2022multicolor}).
We thank for both referees for careful reading and useful comments.

%

\end{document}